\setlist[enumerate]{leftmargin=.5in}
\setlist[itemize]{leftmargin=.5in}
\providecommand{\keywords}[1]{\textbf{\textit{Keywords: }} #1}
\providecommand{\MSCcodes}[1]{\textbf{\textit{MSC Codes: }} #1}
\newtheorem{theorem}{Theorem}
\newtheorem{proposition}[theorem]{Proposition}
\newtheorem{remark}[theorem]{Remark}
\newtheorem{definition}[theorem]{Definition}
\newtheorem{lemma}[theorem]{Lemma}
\newcommand*\dx{\mathop{}\!\mathrm{d}}
\title{Exact Parameter Identification in PET Pharmacokinetic Modeling Using the Irreversible Two Tissue Compartment Model
\thanks{This work was partly funded by the NIH grants P41-EB017183 and R01-EB031199-02.}}
\author{Martin Holler \thanks{Institute of Mathematics and Scientific Computing, University of Graz. MH further is a member of NAWI Graz (\href{https://www.nawigraz.at}{www.nawigraz.at}) and of BioTechMed Graz (\href{https://biotechmedgraz.at}{biotechmedgraz.at}) (\href{mailto:martin.holler@uni-graz.at}{martin.holler@uni-graz.at})} \and Erion Morina \thanks{Institute of Mathematics and Scientific Computing, University of Graz.
    (\href{mailto:erion.morina@uni-graz.at}{erion.morina@uni-graz.at}).}\and
  Georg Schramm \thanks{Radiological Sciences Laboratory, Stanford University, Stanford, CA, and Department of Imaging and Pathology, KU Leuven, Belgium. (\href{mailto:gschra2@stanford.edu}{gschra2@stanford.edu})}}
\newcommand{\N}{\mathbb{N}}
\newcommand{\R}{\mathbb{R}}
\newcommand{\Kb}{\textbf{K}}
\newcommand{\Mc}{\mathcal{M}}
\newcommand{\Pc}{\mathcal{P}}
\newcommand{\Dc}{\mathcal{D}}
\newcommand{\tis}{{\text{tis}}}
\newcommand{\tot}{{\text{bl}}}
\newcommand{\art}{{\text{art}}}
\newcommand{\fbv}{{\text{fbv}}}
\newcommand{\pet}{{\text{PET}}}
\newcommand{\free}{{\text{fr}}}
\newcommand{\bd}{{\text{bd}}}
\newcommand{\contentskip}[1]{}
\renewcommand{\contentskip}[1]{#1}
\begin{document}

\maketitle

\begin{abstract}
	This work is concerned with the identifiability of metabolic parameters from multi-region measurement data in quantitative PET imaging. It shows that, for the frequently used two-tissue compartment model and under reasonable assumptions, it is possible to uniquely identify metabolic tissue parameters from standard PET measurements, without the need of additional concentration measurements from blood samples. This result, which holds in the idealized, noiseless scenario, indicates that costly concentration measurements from blood samples in quantitative PET imaging can be avoided in principle. The connection to noisy measurement data is made via a consistency result, showing that exact reconstruction is maintained in the vanishing noise limit. Numerical experiments with a regularization approach are further carried out to support these analytic results in an application example.
\end{abstract}

\keywords{
	Quantitative PET imaging, two-tissue compartment model, exact reconstruction, Tikhonov regularization, iteratively regularized Gauss Newton method}

\MSCcodes{
	65L09, 94A12, 92C55, 34C60}

\section{Introduction}

Positron emission tomography (PET) is a non-invasive clinical technique that
images the four dimensional spatio temporal distribution of a radio tracer
in-vivo. In quantitative dynamic PET imaging, after tracer injection, several
3D PET images at different time points are acquired and reconstructed. The
injected tracer is supplied to the tissues via the arteries and capillaries. As
a response to this ``arterial tracer input'', the tracer is exchanged with
tissues. This exchange can include reversible and/or irreversible binding and
eventually metabolization of the tracer.

Using the right tracer, the time series of reconstructed PET images reflecting
the tissue response, a measurement of the arterial tracer input, and a
dedicated pharmacokinetic model, it is possible to generate images reflecting
certain physiological parameters. Depending on the tracer, these parametric
images can reflect e.g. blood flow, blood volume, glucose metabolism or neuro
receptor dynamics.

Pharmacokinetic modeling in PET is commonly performed using compartment models,
where the compartments usually reflect tissue subspaces. Example for such
subspaces are the extra cellular spaces where the tracer is free or
bound.\footnote{Note that the exact interpretation of the biological meaning of
	the compartments is highly non-trivial.} The dynamics between the arterial
blood and tissue compartments is typically described using
ordinary-differential-equation (ODE) models. For PET tracers with irreversible
binding, such as [\textsuperscript{18}F]Fluorodeoxyglucose (FDG)
\cite{sokoloff_mapping_1978} or [\textsuperscript{11}C]Clorgyline
\cite{logan_strategy_2002}, the irreversible two tissue compartment model can
be used to describe the tracer dynamics, see Figure
\ref{fig:comparment_model_scheme} for a scheme of this model.

The identification of the kinetic parameters describing the tissue response to
the arterial tracer supply in a given region is commonly done using the
following input data:
\begin{enumerate}
	\item The tracer concentration in tissue $C_\tis(t)$. This quantity, which is equal
	      to the sum of all the tracer concentrations in all extra-vascular compartments,
	      can be directly obtained from the time series of reconstruction PET images -
	      either on a region-of-interest (ROI) or at voxel level.
	\item The arterial ``input function'' or plasma concentration $C_\art(t)$ of the
	      non-metabolized tracer describing the concentration of tracer in the arterial
	      blood plasma that is supplied to tissue and available for exchange (and
	      metabolism). A direct measurement of $C_\art(t)$ is complicated. Typically, it
	      is based on an external measurement of a time series of arterial blood samples
	      taken from a patient, e.g. using a well counter. Unfortunately, the total
	      arterial blood tracer concentration $C_\tot(t)$ obtained from the well counter
	      measurements usually overestimates $C_\art(t)$ since the measured activity of
	      the blood samples also includes activity from radioactive molecules that are
	      not available for exchange with tissue because of (i) parts of the radio tracer
	      being bound to plasma proteins and (ii) activity originating from metabolized
	      tracer molecules that were transfered back from tissue into blood. Measuring
	      the contributions of the latter two effects, summarized in the parent plasma
	      fraction $f(t) = C_\art(t) / C_\tot(t)$, requires further advanced chemical
	      processing and analysis of the blood samples and is thus time consuming and
	      expensive.
\end{enumerate}
To avoid the necessity of arterial blood sampling, which itself is a very
challenging process in clinical routine, many attempts have been made to derive
the arterial input function directly from the reconstructed PET images (also
called ``image-based arterial input function'') by analyzing the tracer
concentration in regions of interest of the PET images containing arterial
blood, such as the left ventricle, the aorta, or the carotic arteries. Note,
however, that by using any image-based approach for the estimation of the
arterial input function, the contributions of tracer bound to plasma proteins
and metabolized tracers cannot be determined. In other words, any image-based
approach can only estimate $C_\tot(t)$ instead of $C_\art(t)$ since no
measurement of the parent plasma fraction $f(t)$ is available.

Motivated by the problem, we consider the question whether the identification
of kinetic parameters using data from multiple anatomical regions of interest
and the irreversible two tissue compartment model is possible without having
access to the common parent plasma fraction $f(t)$ and/or the common total
arterial blood tracer concentration $C_\tot(t)$.

In existing literature on modeling approaches for quantitative PET, see for
instance \cite{Ver13} and \cite{Ton15} for a review, this question has been
addressed from a computational perspective. The work \cite{Ver13}, for
instance, accounts for a low number of measurements of the arterial
concentration of non-metabolized PET tracer by using a non-linear mixed effect
model for the parent plasma fraction, i.e., the parameters defining the parent
plasma fraction are modeled as being partially patient-specific and partially
the same for a population sample. Moreover, the general idea of jointly
modeling the tissue response in different anatomical regions to obtain unknown
common parameters or to reduce the variance in the estimated region-dependent
parameters has been proposed in \cite{raylman_1994, huesman_1997,
	ogden_2015,chen_2019}.

Despite these and many more computational approaches for parameter
identification in pharmacokinetic modeling using the irreversible two tissue
compartment model, a mathematical analysis about the necessity of measurements
of $C_\tot(t)$ and/or $f(t)$ for successful parameter identification, even in
an idealized, noiseless scenario, does not exist. In addition to that, even in
the presence of an arbitrary number of such measurements, it is not clear from
an analytical perspective if the tissue parameters in specific ODE-based
compartment models can be recovered uniquely.

The aim to this work is to answer these questions. Using a polyexponential
parametrization of the arterial plasma concentration, which is frequently used
in practice, we prove the following: Let $(K_1^i,k_2^i,k_3^i)$ be the kinetic
parameters of different anatomical regions $i=1,\ldots,n$ of the irreversible
two tissue compartment model, let $T$ be the number of time-points where PET
measurements of the tissue tracer concentration $C_\tis(t)$ are available, and
let $p$ be the degree of the polyexponential parametrization. Then, if
$T\geq2(p+3)$, and under some non-restrictive technical conditions as stated in
Theorem \ref{thm:main_uniqueness}, the parameters $k_2^i,k_3^i$ for
$i=1,\ldots,n$ can be identified uniquely already from the available
measurements of the tracer concentration in the different tissues without the
need for $C_\tot(t)$ and $f(t)$. Further, the $K_1^i$ can also be identified
already from these measurements up to a constant that is the same for all
regions $i$. In addition, the parameters $K_1^i$ can be identified exactly if a
sufficient number of measurements of the total arterial tracer concentration
$C_\tot$ is available, without the need for $f(t)$.

Besides these unique identifiability results that consider the idealized case
of a noise-free measurement, we also present analytical results for a standard
Tikhonov regularization approach that addresses the situation of noisy
measurements. Using classical results from regularization theory, we show that
the Tikhonov regularization approach is stable w.r.t. perturbations of the data
and, in the vanishing noise limit, allows to approximate the ground-truth
tissue parameters. Numerical experiments further illustrate our analytic
results also in an application example.

\noindent\textbf{Scope of the paper.} In Section \ref{sec:compartment_model}, we introduce the irreversible two tissue compartment model and provide basic results on explicit solutions both in the general case and in case the arterial concentration is parametrized via polyexponential functions. In Section \ref{sec:unique_identifiability} we present and prove our main results on unique identifiability of parameters. In Section \ref{sec:tikhonov_parameter_id} we introduce a Tikhonov regularization approach and show stability and consistency results, and in Sections \ref{sec:algorithm} and \ref{sec:experiments} we provide a numerical algorithm and numerical experiments for an application example.

\section{The irreversible two tissue compartment model} \label{sec:compartment_model}
The \emph{irreversible two tissue compartment model} describes the
interdependence of the concentration of a radio tracer in the arterial blood
plasma and in the extra-vascular compartment, where the latter is further
decomposed in a \emph{free} and a \emph{bound} compartment. Note that in the
irreversible model, once the radio tracer has reached the bound compartment, it
is trapped. A visualization of the model is provided in Figure
\ref{fig:comparment_model_scheme}.

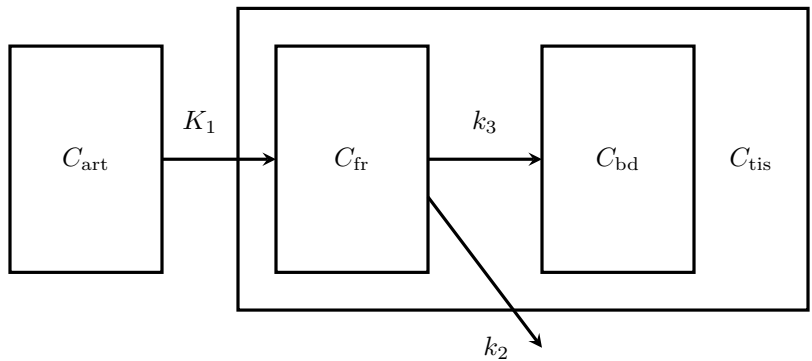
\begin{figure}
	\begin{center}
		\begin{tikzpicture}[%
				>=stealth,
				node distance=3cm,
				on grid,
				auto
			]
			\draw[very thick] (0,0.5) rectangle (2,3.5);
			\draw[very thick] (3,0) rectangle (10.5,4);
			\draw[very thick] (3.5,0.5) rectangle (5.5,3.5);
			\draw[very thick] (7,0.5) rectangle (9,3.5);

			\path[->,very thick] (2,2) edge (3.5,2);
			\path[->,very thick] (5.5,2) edge (7,2);
			\path[->,very thick] (5.5,1.5) edge (7,-0.5);

			\node (T) at (1,2)[align=center]{\(C_\art\)};;
			\node (T) at (4.5,2)[align=center]{\(C_\free\)};;
			\node (T) at (8,2)[align=center]{\(C_\bd\)};;
			\node (T) at (9.75,2)[align=center]{\(C_\tis\)};;

			\node (T) at (2.5,2.5)[align=center]{\(K_1\)};;
			\node (T) at (6.25,2.5)[align=center]{\(k_3\)};;
			\node (T) at (6.4,-0.5)[align=center]{\(k_2\)};;
		\end{tikzpicture}
	\end{center}
	\caption{\label{fig:comparment_model_scheme} Irreversible two tissue compartment model. The boxes around \(C_\art, C_\bd, C_\free\) and \(C_\tis\) represent the respective concentrations and the arrows between them the directional exchange with the respective rate depicted above or below the corresponding arrow. The box around the \(C_\bd\) and \(C_\free\) indicates the extra-vascular measurements associated with \(C_\tis\).}
\end{figure}

We denote by $C_\art:[0,\infty) \rightarrow [0,\infty)$ the arterial plasma
concentration of the non-metabolized PET tracer. Further, for any anatomical
region $i=1,\ldots,n$, we denote by $C^i_\free:[0,\infty) \rightarrow
	[0,\infty)$ and $C^i_\bd:[0,\infty) \rightarrow [0,\infty)$ the free and the
bound compartment of the tracer in region $i$, respectively, and by $C^i_\tis =
	C^i_\free + C^i_\bd$ we denote the sum of the two compartments in region $i$.
Using the \emph{irreversible two tissue compartment model}, the interaction of
these quantities is described by the following system of ordinary differential
equations (ODEs):
\begin{align*}
	\label{eq:odesystem}
	\begin{cases}
		\frac{\dx}{\dx t}C^i_\free= K^i_1 C_\art - \left(k^i_2+k^i_3\right) C^i_\free, & t>0 \\
		\frac{\dx}{\dx t}C^i_\bd = k^i_3 C^i_\free,                                    & t>0 \\
		C^i_\free\left(0\right)=0, ~ C^i_\bd\left(0\right)=0 \tag{S}
	\end{cases}
\end{align*}
Here, the parameters $K_1^i$, $k_2^i$ and $k_3^i$ are the tracer kinetic parameters that
define the interaction of the different compartments in region $i$.

Our goal is to identify the parameters $K_1^i$, $k_2^i$ and $k_3^i$ for each
$i=1,\ldots,n$. For this, we can use measurements of the $C_\tis^i(t_l)$ at
different time-points $t_1,\ldots,t_T$ obtained from reconstructed PET images
at different time points after tracer injection. Further, the parameter
identification typically relies on additional measurements related to $C_\art$.
Here, the standard procedure is to take arterial blood samples and to measure
the total activity concentration of the arterial blood samples, given as
$C_\tot:[0,\infty) \rightarrow [0,\infty)$. The relation of the total
concentration $C_\tot$ to the arterial plasma concentration $C_\art$ of
non-metabolized tracer is described via an unknown parent plasma fraction
function $f:[0,\infty) \rightarrow [0,1]$ with $f(0)=1$ as
\[ C_\art(t) = f(t)C_\tot(t) .\]
As described above, to obtain $f(t)$ and thus $C_\art(t)$, a time-consuming and
costly plasma separation and metabolite analysis of the blood samples has to be
performed.

In order to realize the parameter identification for the ODE model
\eqref{eq:odesystem} in practice, the involved functionals need to be
discretized, e.g., via a suitable parametrization. For the arterial
concentration $C_\art$, it is standard to use a parametrization via
polyexponential functions as defined in the following.
\begin{definition}[Polyexponential functions]
	We call a function \(g\) \index{Polyexponential!class} polyexponential of degree \(p\in\mathbb{N}\) if there exist \(\lambda_i, \mu_i\in\mathbb{R}\) for \(1\leq i\leq p\) where the \(\left(\mu_i\right)_{i=1}^p\) are pairwise distinct and \(\lambda_i\neq 0\) for all \(i\) such that
	\[
		g\left(t\right) = \sum_{i=1}^p\lambda_i e^{\mu_i t}.
	\]
	We write \(\deg\left(g\right)=p\) and call the zero-function polyexponential of
	degree zero. By \(\mathcal{P}_p\) we denote the set of polyexponential
	functions of degree less or equal to $p$, and by $\Pc = \cup_{p=0}^\infty
		\Pc_p$ the set of polyexponential functions (of any degree).
\end{definition}
\begin{remark}
	\label{polyprop}
	It obviously holds that $\Pc$ is a vector space, and even a subalgebra of \(\mathcal{C}^\infty\left(\mathbb{R}\right)\). It is also worth noting that, as direct consequence of the Stone-Weierstrass Theorem, polyexponential functions are dense in the set of continuous functions on compact domains. Thus, they are a reasonable approximation class also from the analytic perspective.
\end{remark}
Modeling $C_\art$ as polyexponential function already defines a parametrization of the resulting solutions of the ODE system \eqref{eq:odesystem}. As we will see in Lemma \ref{lem:solution_representation_polyexponential} below, the following notion of generalized polyexponential functions is the appropriate notion to describe such solutions.
\begin{definition}[Generalized Polyexponential class]
	\label{def:gen_poly_class}
	We call a function $g$ generalized polyexponential if it is of the form
	\[
		g\left(t\right)=P_1\left(t\right)e^{\mu_1 t}+\dots+P_l\left(t\right)e^{\mu_l t},
	\]
	where the \(P_1, \dots, P_l\not\equiv 0\) are polynomials of degree \(m_1-1,
	\dots, m_l-1\), respectively, and the $\mu_j$ are pairwise distinct constants.
	We denote the class of such generalized polyexponential functions with
	polynomials of degree at most $m_1-1,\ldots,m_l-1$ by
	\[
		\mathcal{P}\left[m_1, \dots, m_l\right].
	\]
	We define the degree of \(g\) by
	\[
		\deg\left(g\right)=m_1+\dots+m_l.
	\]
	In case \(m_1=\dots=m_l=1\) we write \(\mathcal{P}_{\deg\left(g\right)}\) for
	the resulting polyexponential class.
\end{definition}
The next two results, which follow from standard ODE theory, provide explicitly the solutions $(C_\free,C_\bd)$ of the ODE system \eqref{eq:odesystem}, once in the general case and once in case $C_\art$ is modeled as polyexponential function.
\begin{lemma} \label{lem:ode_solution_general}
	Let $C_\art:[0,\infty) \rightarrow [0,\infty)$ be continuous, and let the parameters $K_1^i$, $k_2^i$ and $k_3^i$ be fixed for $i=1,\ldots,n$ such that $k_2^i + k_3^i \neq 0$. Then, for each $i=1,\ldots,n$, the ODE system \eqref{eq:odesystem} admits a unique solution $(C_\free^i,C_\bd^i)$ that is defined on all of $[0,\infty)$, and such that $C_\tis^i = C_\free^i + C_\bd^i$ is given as
	\begin{equation}
		\label{eq:ct_representation_general}
		C_\tis^i = \frac{K_1^ik_2^i}{k_2^i+k_3^i}e^{-\left(k_2^i+k_3^i\right)t}\int_0^t e^{\left(k_2^i+k_3^i\right)s} C_\art\left(s\right)\dx s+\frac{K_1^ik_3^i}{k_2^i+k_3^i}\int_0^t C_\art\left(s\right)\dx s.
	\end{equation}
\end{lemma}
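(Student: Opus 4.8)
The plan is to exploit the triangular (decoupled) structure of the system \eqref{eq:odesystem}: since the first equation for $C_\free^i$ does not involve $C_\bd^i$, it can be solved in isolation as a scalar linear first-order ODE, after which $C_\bd^i$ is recovered by a single integration. Throughout I fix $i$ and abbreviate $\alpha := k_2^i + k_3^i$, which is nonzero by hypothesis.

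First I would solve $\frac{\dx}{\dx t} C_\free^i + \alpha\, C_\free^i = K_1^i C_\art$ by the integrating-factor method. Multiplying by $e^{\alpha t}$ turns the left-hand side into $\frac{\dx}{\dx t}\bigl(e^{\alpha t} C_\free^i\bigr)$, and integrating from $0$ to $t$ with the initial condition $C_\free^i(0)=0$ yields
\[
	C_\free^i(t) = K_1^i e^{-\alpha t} \int_0^t e^{\alpha s} C_\art(s)\dx s.
\]
Because $C_\art$ is continuous, this integral is well-defined for every $t\in[0,\infty)$ and is a continuously differentiable function of $t$, so the solution extends to the whole half-line; uniqueness is immediate from the linearity of the scalar equation (equivalently, from Picard--Lindel\"of, the right-hand side being Lipschitz in $C_\free^i$ uniformly on bounded time intervals). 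Substituting into the second equation and integrating gives $C_\bd^i(t) = k_3^i \int_0^t C_\free^i(s)\dx s$, again globally defined, which also settles existence and uniqueness for the full system.

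The remaining step is to simplify $C_\tis^i = C_\free^i + C_\bd^i$ into the claimed closed form. Writing $I(t) := \int_0^t e^{\alpha s} C_\art(s)\dx s$, so that $C_\free^i = K_1^i e^{-\alpha t} I(t)$, I would evaluate $\int_0^t e^{-\alpha s} I(s)\dx s$ by integration by parts, using $I'(s) = e^{\alpha s} C_\art(s)$ and $I(0)=0$: the boundary term produces $-\tfrac{1}{\alpha} e^{-\alpha t} I(t)$ and the remaining integral collapses to $\tfrac{1}{\alpha}\int_0^t C_\art(s)\dx s$. Adding $C_\free^i$ and $C_\bd^i$ then combines the two $e^{-\alpha t} I(t)$ contributions with coefficient $K_1^i\bigl(1 - k_3^i/\alpha\bigr) = K_1^i k_2^i/\alpha$, which is exactly the first term of \eqref{eq:ct_representation_general}, while the leftover integral gives the second term.

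There is no genuine obstacle here---every step is standard ODE theory---but the one place requiring care is the integration-by-parts manipulation, where the algebraic identity $1 - k_3^i/(k_2^i+k_3^i) = k_2^i/(k_2^i+k_3^i)$ is what makes the coefficients match the stated formula. The hypothesis $\alpha \neq 0$ is used precisely here, and in the integrating factor, to guarantee that dividing by $k_2^i+k_3^i$ is legitimate.
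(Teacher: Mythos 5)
Your proposal is correct and follows essentially the same route as the paper: solve the first equation by an integrating factor, recover $C_\bd^i$ by integrating $k_3^i C_\free^i$ (via integration by parts on $\int_0^t e^{-\alpha s}I(s)\dx s$), and combine using $1 - k_3^i/(k_2^i+k_3^i) = k_2^i/(k_2^i+k_3^i)$. The paper's proof merely states the resulting formulas without spelling out these intermediate computations, which you carry out explicitly and correctly.
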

\begin{proof}
	Fix $i \in \{1,\ldots,n\}$. From the equation for $C_\free^i$ in \eqref{eq:odesystem} it immediately follows that
	\begin{equation}
		\label{solution_S}
		C_\free^i\left(t\right) = K_1^i e^{-\left(k_2^i+k_3^i\right)t}\int_0^t e^{\left(k_2^i+k_3^i\right)s}C_\art\left(s\right)\dx s.
	\end{equation}
	This in turn implies that
	\[
		C_\bd^i\left(t\right)= -\frac{K_1^i k_3^i}{k_2^i+k_3^i}e^{-\left(k_2^i+k_3^i\right)t}\int_0^t e^{\left(k_2^i+k_3^i\right)s}C_\art\left(s\right)\dx s+\frac{K_1^i k_3^i}{k_2^i+k_3^i}\int_0^t C_\art\left(s\right)\dx s
	\]
	and, consequently, that
	\[
		C_\tis^i(t) = \frac{K_1^ik_2^i}{k_2^i+k_3^i}e^{-\left(k_2^i+k_3^i\right)t}\int_0^t e^{\left(k_2^i+k_3^i\right)s} C_\art\left(s\right)\dx s+\frac{K_1^ik_3^i}{k_2^i+k_3^i}\int_0^t C_\art\left(s\right)\dx s
	\]
	as claimed.
\end{proof}

\begin{lemma}
	\label{lem:solution_representation_polyexponential}
	If $C_\art\in\mathcal{P}_p$ is given as
	\[ C_\art(t) = \sum_{j=1}^p \lambda_j e^{\mu_j t},
	\]
	then, for $i \in \{1,\ldots,n\}$, $C_\tis^i$ of Lemma
	\ref{lem:ode_solution_general} is given as
	\begin{multline}
		\label{ctextended}
		C_\tis^i(t) = \frac{K_1^i}{k_2^i+k_3^i}\sum_{j=1}^p\left( \frac{k_3^i}{\mu_j}\mathbf{1}_{\{\mu_j \neq 0\}}+\frac{k_2^i}{k_2^i+k_3^i+\mu_j}\mathbf{1}_{\left\{k_2^i+k_3^i+\mu_j\neq 0\right\}}\right)\lambda_je^{\mu_jt}
		\\-\left[\frac{K_1^ik_2^i}{k_2^i+k_3^i}\sum_{\substack{j=1 \\ k_2^i+k_3^i+\mu_j\neq 0}}^p\frac{\lambda_j}{k_2^i+k_3^i+\mu_j}\right]e^{-\left(k_2^i+k_3^i\right)t} -\frac{K_1^ik_3^i}{k_2^i+k_3^i}\sum_{\substack{j=1 \\ \mu_j\neq 0}}^p \frac{\lambda_j}{\mu_j} \\
		+\left[\frac{K_1^ik_2^i}{k_2^i+k_3^i}\sum_{\substack{j=1 \\ k_2^i+k_3^i+\mu_j= 0}}^p\lambda_j\right]te^{-\left(k_2^i+k_3^i\right)t} + \frac{K_1^ik_3^i}{k_2^i+k_3^i}\sum_{\substack{j=1 \\ \mu_j= 0}}^p  \lambda_j t
	\end{multline}
\end{lemma}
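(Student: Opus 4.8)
The plan is to prove the formula by direct substitution, since Lemma~\ref{lem:ode_solution_general} already supplies the closed form \eqref{eq:ct_representation_general} for $C_\tis^i$ as a sum of two integrals against $C_\art$. With $C_\art$ now given explicitly as $\sum_{j=1}^p \lambda_j e^{\mu_j t}$, the whole statement reduces to evaluating these integrals and reorganizing the outcome. First I would use linearity to pull the finite sum out of each integral, so that the computation rests on the two elementary primitives $\int_0^t e^{(k_2^i+k_3^i+\mu_j)s}\dx s$ and $\int_0^t e^{\mu_j s}\dx s$.

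The key step is the case distinction for each of these integrals. For the first, I would split according to whether the exponent $k_2^i+k_3^i+\mu_j$ vanishes: when it is nonzero the primitive is $\frac{1}{k_2^i+k_3^i+\mu_j}\bigl(e^{(k_2^i+k_3^i+\mu_j)t}-1\bigr)$, whereas in the resonant case $k_2^i+k_3^i+\mu_j=0$ the integrand equals $1$ and the primitive is simply $t$. Analogously, the second integral splits on whether $\mu_j=0$, giving $\frac{1}{\mu_j}\bigl(e^{\mu_j t}-1\bigr)$ generically and $t$ in the degenerate case. These two dichotomies are precisely what the indicators $\mathbf{1}_{\{k_2^i+k_3^i+\mu_j\neq 0\}}$ and $\mathbf{1}_{\{\mu_j\neq 0\}}$ in \eqref{ctextended} encode.

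It then remains to assemble the pieces. Multiplying the evaluated first integral by the prefactor $\frac{K_1^ik_2^i}{k_2^i+k_3^i}e^{-(k_2^i+k_3^i)t}$, the factor $e^{(k_2^i+k_3^i+\mu_j)t}$ collapses to $e^{\mu_j t}$, contributing the $\frac{k_2^i}{k_2^i+k_3^i+\mu_j}$ part of the $e^{\mu_j t}$ coefficient, while the constant $-1$ produces the bracketed $e^{-(k_2^i+k_3^i)t}$ term; the resonant indices instead produce the $te^{-(k_2^i+k_3^i)t}$ term. Multiplying the evaluated second integral by $\frac{K_1^ik_3^i}{k_2^i+k_3^i}$ yields the $\frac{k_3^i}{\mu_j}e^{\mu_j t}$ contributions, the constant $-\frac{K_1^ik_3^i}{k_2^i+k_3^i}\sum_{\mu_j\neq 0}\lambda_j/\mu_j$, and, for $\mu_j=0$, the linear term $\frac{K_1^ik_3^i}{k_2^i+k_3^i}\sum_{\mu_j=0}\lambda_j t$. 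Finally I would merge the two $e^{\mu_j t}$ contributions into the single bracketed coefficient of \eqref{ctextended}.

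I do not expect a genuine mathematical obstacle: the result follows entirely from elementary integration and the linearity of \eqref{eq:ct_representation_general}. The only real difficulty is bookkeeping, namely tracking the resonant cases correctly so that the degenerate exponents generate linear-in-$t$ terms rather than being silently discarded, and matching each resulting summand to its correct slot among the $e^{\mu_j t}$, $e^{-(k_2^i+k_3^i)t}$, $te^{-(k_2^i+k_3^i)t}$, constant, and $t$ terms of \eqref{ctextended}.
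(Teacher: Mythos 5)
Your proposal is correct and follows exactly the paper's approach: the paper's proof is precisely "insert the polyexponential representation of $C_\art$ into \eqref{eq:ct_representation_general} and integrate," and your case analysis on the vanishing of $\mu_j$ and $k_2^i+k_3^i+\mu_j$ correctly accounts for all terms, including the resonant ones producing the linear-in-$t$ contributions.
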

\begin{proof}
	This follows immediately by inserting the representation of $C_\art$ in \eqref{eq:ct_representation_general}.
\end{proof}
\begin{remark}[Sign of exponents $\mu_j$] \label{rem:sign_of_exponents}
	Note that for the ground truth arterial concentration $C_\art$, we will always have $\mu_j < 0$ (in particular $\mu_j \neq 0$) for all $j=1,\ldots,p$, since otherwise this would imply the unphysiological situation that $C_\art(t) \rightarrow c \neq 0$ for $t \rightarrow \infty$.
\end{remark}

\section{Unique identifiability} \label{sec:unique_identifiability}

In view of Lemma \ref{lem:solution_representation_polyexponential} from the
previous section, it is clear that the question of unique identifiability of
the parameters $K_1^i$, $k_2^i$ and $k_3^i$ from measurements of
$C_\tis^i(t_l)$ at time points $t_1,\ldots,t_T$ is related to the question of
uniqueness of interpolations with generalized polyexponential functions. A
first, existing result in that direction is as follows.

\begin{lemma}[Roots of generalized polyexponential functions]
	\label{lem:genpolyroots}
	Let \(P_1, \dots, P_l\) be polynomials of degree \(m_1-1, \dots, m_l-1\) such
	that at least one of them is not identically zero, and let the constants
	\(\mu_1, \dots, \mu_l\) be pairwise distinct. Then the function
	\[
		g\left(t\right)=P_1\left(t\right)e^{\mu_1 t}+\dots+P_l\left(t\right)e^{\mu_l t}
	\]
	admits at most \(m_1+\dots+m_l-1\) real roots.
\end{lemma}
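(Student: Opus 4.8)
The plan is to argue by strong induction on the total degree $M = m_1 + \dots + m_l$, using the classical theorem of Rolle together with a differentiation trick that successively eliminates exponential terms. The base case $M=1$ is immediate: then $g(t)=\lambda e^{\mu_1 t}$ with $\lambda \neq 0$ has no real roots, matching the bound $M-1=0$. For the inductive step I would first normalize the problem by replacing $g$ with $e^{-\mu_1 t}g(t)$; since $e^{-\mu_1 t}$ is smooth and strictly positive, this changes neither the location nor the multiplicity of any real root, and it lets me assume without loss of generality that $\mu_1 = 0$, so that $P_1$ is an honest polynomial of degree $m_1-1$ and all remaining exponents $\mu_2,\dots,\mu_l$ are nonzero and pairwise distinct.

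The heart of the argument is to differentiate $g$ exactly $m_1$ times. The term $P_1(t)$ is a polynomial of degree $m_1-1$ and is therefore annihilated. For each $j\geq 2$, a direct computation gives $\frac{\dx}{\dx t}\bigl(P_j(t)e^{\mu_j t}\bigr) = \bigl(P_j'(t)+\mu_j P_j(t)\bigr)e^{\mu_j t}$, and because $\mu_j \neq 0$ the bracketed polynomial has the same degree as $P_j$ (its leading coefficient is merely rescaled by $\mu_j$). Iterating, $g^{(m_1)}(t)=\sum_{j=2}^{l}\widetilde{P}_j(t)e^{\mu_j t}$ with $\deg \widetilde{P}_j = m_j-1$ and $\widetilde{P}_j\not\equiv 0$, so $g^{(m_1)}$ is again a generalized polyexponential in the sense of Definition \ref{def:gen_poly_class}, now with only $l-1$ exponential terms and total degree $m_2+\dots+m_l = M-m_1 < M$. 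The induction hypothesis then bounds its real roots by $M-m_1-1$. The degenerate subcase $l=1$ needs separate (but trivial) treatment: there $g$ shares its roots with the polynomial $P_1$ of degree $M-1$, whence at most $M-1$ roots by the fundamental theorem of algebra.

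Finally I would close the induction with Rolle's theorem in its form counting multiplicities: if a $\mathcal{C}^\infty$ function has $r$ real roots counted with multiplicity, then its derivative has at least $r-1$. Applying this $m_1$ times shows that if $g$ has $r$ real roots (with multiplicity), then $g^{(m_1)}$ has at least $r-m_1$; combined with the bound $r-m_1 \leq M-m_1-1$ from the previous step this yields $r \leq M-1$, as desired, and a fortiori the same bound on the number of distinct roots. I expect the main obstacle to be bookkeeping rather than conceptual: one must (a) justify the multiplicity-counting version of Rolle's theorem, accounting both for roots of the derivative strictly between consecutive roots of $g$ and for the drop in multiplicity at a repeated root, and (b) be careful that the normalization $\mu_1=0$ is precisely what makes the $m_1$-fold differentiation kill a term \emph{without} lowering the degrees of the surviving ones, since had $\mu_1$ been left nonzero, differentiating would never remove an exponential. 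The fact that $g^{(m_1)}\not\equiv 0$, needed to invoke the induction hypothesis, follows from the preserved degrees of the $\widetilde{P}_j$.
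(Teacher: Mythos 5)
Your argument is correct, and it is essentially \emph{the} classical proof of this fact: the paper itself gives no proof but defers to P\'olya--Szeg\H{o} (Exercise 75), where exactly this induction on the total degree --- normalize one exponent to zero, differentiate $m_1$ times to annihilate one term while preserving the degrees of the others, and close with the multiplicity-counting form of Rolle's theorem --- is the intended solution. So you have supplied a self-contained derivation of what the authors treat as a citation, which is a net gain in completeness. One small piece of bookkeeping beyond what you flag: the hypothesis only requires that \emph{at least one} $P_j$ be nonzero, so besides the subcase $l=1$ you should also cover $l\geq 2$ with $P_2\equiv\dots\equiv P_l\equiv 0$ (there $g^{(m_1)}\equiv 0$ and the induction hypothesis is not applicable); the clean fix is to discard all identically vanishing terms at the outset, which only lowers $M$ and hence strengthens the bound you must prove. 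With that convention in place, your induction is airtight, including the key observation that $\mu_j\neq 0$ keeps $\deg\widetilde{P}_j=\deg P_j$ so that $g^{(m_1)}\not\equiv 0$.
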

\begin{proof}
	See \cite[Exercise 75 (p. 48)]{Polya2013}.
\end{proof}

As a consequence of the previous proposition, we now obtain the following
unique interpolation result for generalized polyexponential functions.
\begin{lemma}[Unique interpolation]
	\label{lem:polyexpinter}
	Let \(m_1,\dots, m_p, T\in \mathbb{N}\) be such that \[
		2\left(m_1+\dots+m_p\right)\leq T.\]
	Then, for any choice of tuples \(\left(t_i, s_i\right)\in \mathbb{R}^2\),
	$i=1,\ldots,T$, with \(t_1< \dots< t_T\), there exists at most one generalized
	polyexponential function \(h\in\mathcal{P}\left[m_1, \dots, m_p\right]\) such
	that
	\begin{equation}
		\label{interpolat1}
		h(t_l) = s_l
	\end{equation}
	for $l=1,\ldots,T$,  i.e., in case
	\[ h(t) = \sum_{j=1}^p P_j\left(t\right)e^{\mu_j t} \quad \text{and}\quad \tilde h(t) = \sum_{j=1}^p \tilde P_j\left(t\right)e^{\tilde \mu_j t}
	\]
	are two generalized polyexponential functions with
	$h,\tilde{h}\in\mathcal{P}\left[m_1, \dots, m_p\right]\) fulfilling the
		interpolation condition \eqref{interpolat1}, then, up to re-indexing,
		\[ P_j \equiv \tilde{P_j} \]
		for all $j$ and $\mu_j = \tilde{\mu}_j$ for all $j$ where $P_j \not\equiv 0$.
\end{lemma}
\begin{proof}
	Let both $h, \tilde{h}\in\mathcal{P}\left[m_1, \dots, m_p\right]\) fulfill the interpolation condition \eqref{interpolat1}, and, w.l.o.g., assume that $P_j \not \equiv 0$ and $\tilde{P}_j \not \equiv 0$ for all $j$. Then, $h - \tilde{h} \in \mathcal{P}\left[m_1, \dots, m_p,m_1,\ldots,m_p\right]$ and $(h-\tilde{h})(t_l) = 0$ for $l=1,\ldots,T$. Lemma \ref{lem:genpolyroots} then implies that all polynomials appearing in $h-\tilde{h}$ in a representation as in Definition \ref{def:gen_poly_class} are identically zero.
		This implies that the $(P_j)_{j=1}^p$ and $(\tilde P_j)_{j=1}^p$ coincide up to re-indexing and, likewise, that the corresponding coefficient $(\mu_j)_{j=1}^p$ and $(\tilde \mu_j)_{j=1}^p$ where the corresponding polynomials are non-zero coincide as well.
\end{proof}

Based on this result, we now address the question of uniquely identifying the
parameters of the ODE system \eqref{eq:odesystem} from time-discrete
measurements $C_\tis^i(t_1), \ldots,C_\tis^i(t_T)$ with $i=1,\ldots,n$ and
measurements $C_\tot(s_1),\ldots,C_\tot(s_q)$. For this, we first introduce the
following notation.
\begin{definition}[Parameter configuration] We call the parameters $p,n\in \N$, $((\lambda_j,\mu_j))_{j=1}^p \in \R^{2 \times p} $, $((K_1^i,k_2^i,k_3^i))_{i=1}^n \in \R^{3 \times n}$ together with the functions $(C^i_\tis)_{i=1}^n$ and
	\[ C_\art(t) = \sum_{j=1}^p \lambda_j e^{\mu_jt} \]
	a configuration of the irreversible two tissue compartment model if $\lambda_j
		\neq 0$ for $j=1,\ldots,p$, the $\mu_j$, $j=1,\ldots,p$ are pairwise distinct,
	and, for $i=1,\ldots,n$, $C_\tis^i = C_\free^i + C_\bd^i$ with
	$(C_\free^i,C_\bd^i)$ the solution of the ODE system \eqref{eq:odesystem} with
	arterial concentration $C_\art$ and parameters $K_1^i,k_2^i,k_3^i$.

\end{definition}

Central for our unique identifiability result will be the following technical
assumption on a parameter configuration $(p,n,((\lambda_j,\mu_j))_{j=1}^p
	,((K_1^i,k_2^i,k_3^i))_{i=1}^n,(C^i_\tis)_{i=1}^n,C_\art)$.
\begin{equation} \label{ass:main_technical_ass} \tag{A}
	\left\{
	\begin{aligned}
		 & \text{For any $j_0$, there are at least three regions } i_s, \, s=1,\ldots,3, \text{ where }                  \\
		 & k^{i_s}_3 \text{ and } k^{i_s}_2 + k^{i_s}_3 \text{ are each pairwise distinct, }\mu_{j_0} + k_3^{i_s} \neq 0 \\
		 & \text{and either }\mu_{j_0} + k_2^{i_s} + k_3^{i_s} = 0 \text{ or } \sum_{\substack{j=1                       \\ \mu_{j} + k_2^{i_s} + k_3^{i_s} \neq 0}}^p \frac{\lambda_j}{k_2^{i_s} + k_3^{i_s} + \mu_j} \neq 0
	\end{aligned}
	\right.
\end{equation}
As the following lemma shows, this assumption holds in case our measurement setup comprises sufficiently many regions where the parameters $k_3^i$ and $k_2^i+k_3^i$ are pairwise distinct. This is reasonable to assume in practice, and also it is a condition which is to be expected: Our unique identifiability result will require a sufficient amount of different regions, and different regions with the same tissue parameter do not provide any additional information on the dynamics of the ODE model.
\begin{lemma} \label{lem:simpler_version_of_main_assumption}
	Assume that there are at least $p+3$ regions $i_{1},\ldots,i_{p+3}$, with $p\geq 1$, where each the $k^{i_s}_3$ and the $k^{i_s}_2 + k^{i_s}_3$ are pairwise distinct for $s=1,\ldots,p+3$. Then Assumption \eqref{ass:main_technical_ass} holds.
	\begin{proof}
		For $z \in \R$, note that
		\[
			\left( \prod_{\substack{j=1 \\ \mu_{j} + z \neq 0}}^p {z + \mu_j}  \right)\left( \sum_{\substack{i=1 \\ \mu_{i} + z\neq 0}}^p \frac{\lambda_i}{z  + \mu_i} \right) =
			\sum_{\substack{i=1 \\ \mu_{i} + z \neq 0}}^p \lambda _i
			\prod_{\substack{j=1 \\ j \neq i \\ \mu_{j} + z \neq 0}}^p (z + \mu_j)
		\]
		is a polynomial in $z$ of degree at most $p-1$. Hence it can admit at most
		$p-1$ distinct roots. Now since there are at least $p+3$ regions where each the
		$k^{i_s}_3$ and the $k^{i_s}_2 + k^{i_s}_3$ are pairwise distinct, for at least
		four of them, say $i_1,\ldots,i_4$, $z=k^{i_s}_2 + k^{i_s}_3$ cannot be a root
		of the above polynomial. Further, for those four regions, since the $k^{i_s}_3$
		are pairwise distinct, for any given $\mu_{j_0}$, at most one can be such that
		$\mu_{j_0} + k_3^{i_s} = 0$. As a consequence, the remaining three are such
		that the conditions of Assumption \eqref{ass:main_technical_ass} hold.
	\end{proof}
\end{lemma}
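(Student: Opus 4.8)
The plan is to reduce Assumption \eqref{ass:main_technical_ass} to a statement about when the rational sum in its last line vanishes, and then to bound, by a degree argument, the number of regions for which it can vanish.

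First I would fix $j_0$ and, writing $z = k_2^{i_s}+k_3^{i_s}$, regard $\sum_{j:\,\mu_j+z\neq0}\frac{\lambda_j}{z+\mu_j}$ as a function of the single real variable $z$. Clearing denominators by multiplying with $\prod_{j:\,\mu_j+z\neq0}(z+\mu_j)$ turns it into $\sum_{i:\,\mu_i+z\neq0}\lambda_i\prod_{j\neq i,\,\mu_j+z\neq0}(z+\mu_j)$, which, for $z$ avoiding the finitely many values $-\mu_j$, equals the genuine polynomial $Q(z)=\sum_{i=1}^{p}\lambda_i\prod_{j\neq i}(z+\mu_j)$ of degree at most $p-1$. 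Since $\prod_{j}(z+\mu_j)\neq0$ at such $z$, the sum vanishes exactly where $Q$ does, hence at no more than $p-1$ distinct values of $z$.

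Next I would exploit that the $z_s=k_2^{i_s}+k_3^{i_s}$ are pairwise distinct across the $p+3$ regions. Thus at most $p-1$ of them are roots of $Q$, leaving at least four regions for which the sum is nonzero, so that the second alternative of the ``or'' in \eqref{ass:main_technical_ass} holds. Among these four, the condition $\mu_{j_0}+k_3^{i_s}\neq0$ can fail for at most one region, since the $k_3^{i_s}$ are pairwise distinct and only the single value $k_3^{i_s}=-\mu_{j_0}$ is excluded. Discarding that one leaves at least three regions satisfying $\mu_{j_0}+k_3^{i_s}\neq0$, the sum condition, and—being a subset of the original regions—the pairwise distinctness of $k_3^{i_s}$ and $k_2^{i_s}+k_3^{i_s}$. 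As $j_0$ was arbitrary, \eqref{ass:main_technical_ass} follows.

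The step needing the most care, and what I expect to be the main obstacle, is the passage between the honest polynomial $Q$ and the sum with its $z$-dependent index set. For a region in ``resonance'', i.e. with $k_2^{i_s}+k_3^{i_s}=-\mu_{j'}$ for some $j'$, one term drops out and the clean identity sum $=Q(z)/\prod_j(z+\mu_j)$ no longer applies. When $j'=j_0$ the first alternative of the ``or'' is active and the region is admissible outright; but when $j'\neq j_0$ one still needs the reduced sum $\sum_{j\neq j'}\frac{\lambda_j}{\mu_j-\mu_{j'}}$ to be nonzero, and this is controlled by a lower-degree polynomial rather than by $Q$ itself. I would therefore account for these finitely many resonance values separately—either folding them into the at-most-$p-1$ exceptional values or noting the rigidity of such pole cancellations—so that three admissible regions still survive. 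Making this reconciliation between the degree bound for $Q$ and the resonance escape airtight is the crux of the proof.
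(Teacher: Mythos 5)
Your main line of argument is exactly the paper's: clear denominators to obtain the polynomial $Q(z)=\sum_{i=1}^p\lambda_i\prod_{j\neq i}(z+\mu_j)$ of degree at most $p-1$, conclude that at most $p-1$ of the pairwise distinct values $z_s=k_2^{i_s}+k_3^{i_s}$ can make the sum vanish, keep four regions, and discard at most one more for the condition $\mu_{j_0}+k_3^{i_s}\neq 0$. The difference is that you explicitly isolate the resonance case $z_s=-\mu_{j'}$ with $j'\neq j_0$ and admit you have not closed it; so, as written, your proof is incomplete precisely where you say it is.

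You should know that this is a genuine gap and not an artifact of your formulation: the paper's proof commits to it silently by declaring the displayed expression, with its $z$-dependent index sets, to be ``a polynomial of degree at most $p-1$.'' That identification holds only off the set $\{-\mu_1,\dots,-\mu_p\}$; at $z=-\mu_{j'}$ the restricted expression equals $S_{j'}\prod_{j\neq j'}(\mu_j-\mu_{j'})$ with $S_{j'}=\sum_{j\neq j'}\lambda_j/(\mu_j-\mu_{j'})$, whose vanishing is not controlled by $Q(-\mu_{j'})=\lambda_{j'}\prod_{j\neq j'}(\mu_j-\mu_{j'})\neq 0$. Moreover, neither of your two suggested repairs goes through for free. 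Folding the resonance values into the exceptional set yields a bad set of size up to $(p-1)+(p-1)$, which $p+3$ regions do not cover once $p\geq 2$; and there is no rigidity preventing $S_{j'}=0$ concurrently with $Q$ having its full complement of real roots. For instance, $p=3$, $\mu=(-1,-2,-3)$, $\lambda=(1,1,1)$ gives $S_2=1/(2-1)+1/(2-3)=0$ while $Q(z)=3z^2-12z+11$ has two real non-resonant roots, so the bad set has three elements, exceeding $p-1=2$; distributing the six values $k_2^{i_s}+k_3^{i_s}$ over these three bad points plus $\{1,3,5\}$ and letting one surviving region have $k_3^{i_s}=1$ leaves only two admissible regions for $j_0=1$. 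So the crux you named really is the crux: it cannot be settled by more careful bookkeeping of the shared argument, but requires either more regions (of order $2p+2$) or an extra hypothesis excluding the degeneracies $S_{j'}=0$.
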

Based on Assumption \eqref{ass:main_technical_ass}, we now obtain the following proposition, which is the technical basis for our subsequent results on unique identifiability.
\begin{proposition} \label{prop:main_uniqueness_result}
	Let $(p,n,((\lambda_j,\mu_j))_{j=1}^p ,((K_1^i,k_2^i,k_3^i))_{i=1}^n,(C^i_\tis)_{i=1}^n,C_\art)$ be a configuration of the irreversible two tissue compartment model with $\mu_j \neq 0$ for $j=1,\ldots,p$, $p \geq 3$, $n \geq 3$, $K_1^i,k_2^i,k_3^i > 0$ for all $i=1,\ldots,n$ and such that Assumption \eqref{ass:main_technical_ass} holds. Let further $t_1,\ldots,t_T$ be distinct points such that
	\[ T \geq 2(p+3) .\]
	Then, with $(\tilde p,n,((\tilde \lambda_j,\tilde \mu_j))_{j=1}^{\tilde p}
		,((\tilde K_1^i,\tilde k_2^i,\tilde k_3^i))_{i=1}^n,(\tilde
		C^i_\tis)_{i=1}^n,\tilde C_\art)$ any other configuration of the irreversible
	two tissue compartment model such that $\tilde{p}\leq p$, $\tilde k_3^i \neq 0$
	and $\tilde k_2^i+\tilde k_3^i \neq 0$ for all $i=1,\ldots,n$, it follows from
	$C_\tis(t_l) = \tilde C_\tis(t_l)$ for $l=1,\ldots, T$ that
	\[ \tilde k_2^i = k_2^i \text{ and } \tilde k_3^i = k_3^i
	\]
	for all $i=1,\ldots,n$, that there exists a constant $\zeta \neq 0$ such that
	\[ K_1^i = \zeta  \tilde  K_1^i \]
	for all $i=1,\ldots,n$, that $p= \tilde{p}$ and that (up to re-indexing)
	\[ \tilde \mu_j = \mu_j \text{ and } \tilde \lambda_j = \zeta \lambda_j \text{ for all }j=1,\ldots,p.
	\]
\end{proposition}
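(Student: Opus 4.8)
The plan is to first upgrade the finitely many interpolation conditions into an identity of functions, and then to read the parameters off from the uniqueness of the generalized polyexponential representation. \emph{Step 1 (reduction to a functional identity).} Using Lemma~\ref{lem:solution_representation_polyexponential} together with $\mu_j\neq 0$, each $C_\tis^i$ is a generalized polyexponential whose exponents lie in $\{\mu_1,\dots,\mu_p\}\cup\{-(k_2^i+k_3^i)\}\cup\{0\}$; accounting for the $t$-prefactor that appears when $-(k_2^i+k_3^i)$ coincides with some $\mu_j$, its degree is at most $p+2$. Likewise $\deg(\tilde C_\tis^i)\le\tilde p+2\le p+2$, so $C_\tis^i-\tilde C_\tis^i$ has degree at most $2p+4$. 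By Lemma~\ref{lem:genpolyroots} a nonzero such function has at most $2p+3$ real roots, whereas the hypotheses furnish $T\ge 2(p+3)=2p+6$ roots $t_1,\dots,t_T$; hence $C_\tis^i\equiv\tilde C_\tis^i$ on $\R$ for every $i$. This is exactly where the bound $T\ge 2(p+3)$ is used.

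\emph{Step 2 (matching exponents and coefficients).} By linear independence of the functions $t^k e^{\mu t}$, the identity $C_\tis^i\equiv\tilde C_\tis^i$ forces the exponents occurring with nonzero coefficient, together with their coefficient polynomials, to agree on both sides; equivalently, one may pass to the rational Laplace transforms $\hat C_\tis^i(\omega)=K_1^i\frac{\omega+k_3^i}{\omega(\omega+\sigma^i)}\sum_j\frac{\lambda_j}{\omega-\mu_j}$, with $\sigma^i:=k_2^i+k_3^i$, and match poles and residues. The exponents fall into three kinds: the arterial exponents $\mu_j$, common to every region; the region exponent $-\sigma^i$; and $0$, whose coefficient is a nonzero multiple of $\sum_j\lambda_j/\mu_j$ and is therefore present either in all regions or in none. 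The two alternatives in Assumption~\eqref{ass:main_technical_ass} say precisely that, for the designated regions, $-\sigma^i$ is genuinely present: either it merges with some $\mu_{j_0}$ into a $t$-term, or its coefficient $-\frac{K_1^ik_2^i}{\sigma^i}\sum_{j}\frac{\lambda_j}{\sigma^i+\mu_j}$ does not vanish.

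\emph{Step 3 (separating arterial from region exponents).} Here I expect the real difficulty. Fix $\mu_{j_0}$ and take the three regions $i_1,i_2,i_3$ supplied by Assumption~\eqref{ass:main_technical_ass} (this uses $n\ge 3$), which have pairwise distinct $k_3^{i_s}$ and $\sigma^{i_s}$ and satisfy $\mu_{j_0}+k_3^{i_s}\neq 0$; the latter guarantees that $\mu_{j_0}$ occurs with nonzero coefficient in each $C_\tis^{i_s}$ (with a degree-one coefficient in the merge case $\mu_{j_0}=-\sigma^{i_s}$). Thus $-\sigma^{i_s}\in\{\tilde\mu_j\}\cup\{-\tilde\sigma^{i_s}\}$ and $\mu_{j_0}\in\{\tilde\mu_j\}\cup\{-\tilde\sigma^{i_s}\}$, and the delicate point is to exclude a true region exponent masquerading as a tilde arterial exponent and conversely. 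When $-\sigma^{i_s}$ equals some $\mu_{j'}$ I would compare multiplicities: the merge produces a genuine degree-one coefficient polynomial at $\mu_{j'}$, which the tilde side can match only if $-\tilde\sigma^{i_s}=\mu_{j'}$ as well, forcing $\tilde\sigma^{i_s}=\sigma^{i_s}$; when $-\sigma^{i_s}\notin\{\mu_j\}$ its coefficient is a nonzero constant and it can only match $-\tilde\sigma^{i_s}$. Either way $\sigma^{i_s}=\tilde\sigma^{i_s}$ for the three regions, and since these values are pairwise distinct, $\mu_{j_0}=-\tilde\sigma^{i_s}$ can hold for at most one $s$, leaving at least one region with $\mu_{j_0}\in\{\tilde\mu_j\}$. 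Running this over all $j_0$ gives $\{\mu_j\}\subseteq\{\tilde\mu_j\}$, hence with $\tilde p\le p$ that $p=\tilde p$ and, after re-indexing, $\mu_j=\tilde\mu_j$. The three-region redundancy and the distinctness hypotheses in \eqref{ass:main_technical_ass} are exactly what rule out these coincidences, and this bookkeeping is the main obstacle.

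\emph{Step 4 (recovering the remaining parameters).} With $\mu_j=\tilde\mu_j$ and $\sigma^{i}=\tilde\sigma^{i}$ on the designated regions, the residue at $\mu_j$ reads $K_1^i(\mu_j+k_3^i)\lambda_j=\tilde K_1^i(\mu_j+\tilde k_3^i)\tilde\lambda_j$. Eliminating $\tilde\lambda_j/\lambda_j$ between two designated regions with $k_3^i\neq k_3^{i'}$ yields an identity of quadratics in $\mu_j$ valid at the $p\ge 3$ distinct points $\mu_j$, hence an identity of polynomials: its leading coefficient makes $\tilde K_1^i/K_1^i$ a single region-independent constant, and comparison of the remaining factors (using $k_3^i\neq k_3^{i'}$) gives $k_3^i=\tilde k_3^i$ and therefore $\tilde\lambda_j/\lambda_j=:\zeta$ constant in $j$, i.e.\ $\tilde\lambda_j=\zeta\lambda_j$. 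Finally, for an arbitrary region $i$ the residue identity now becomes an equality of the two Möbius maps $\mu\mapsto K_1^i\frac{\mu+k_3^i}{\mu+\sigma^i}$ and $\mu\mapsto\zeta\tilde K_1^i\frac{\mu+\tilde k_3^i}{\mu+\tilde\sigma^i}$ at $p\ge 3$ points, hence identically; matching behaviour at infinity gives $K_1^i=\zeta\tilde K_1^i$, and matching the zero and the pole gives $k_3^i=\tilde k_3^i$ and $\sigma^i=\tilde\sigma^i$ (the alternative $k_3^i=\sigma^i$ is excluded by $k_2^i>0$), whence $k_2^i=\tilde k_2^i$. Thus $k_2^i,k_3^i$ are recovered exactly and $K_1^i$ up to the common factor $\zeta$; this $\zeta$ is unavoidable because the simultaneous scaling $\lambda_j\mapsto\zeta\lambda_j$, $K_1^i\mapsto K_1^i/\zeta$ leaves every $C_\tis^i$ invariant and hence accounts exactly for the one-parameter ambiguity in the statement.
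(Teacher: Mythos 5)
Your overall strategy coincides with the paper's: reduce the $T$ interpolation conditions to an identity of generalized polyexponential functions via the root count of Lemma \ref{lem:genpolyroots} (your Step 1 is exactly the content of Lemma \ref{lem:polyexpinter}), then match exponents and coefficient polynomials, and finally extract the parameters from the matched coefficients. Your Step 4 is sound and in fact a slightly cleaner packaging of the paper's algebra: cross-multiplying the residue identity $K_1^i\lambda_j(\mu_j+k_3^i)(\mu_j+\tilde k_2^{i'}+\tilde k_3^{i'})\cdots$ into a polynomial identity in $\mu$ valid at $p\geq 3$ points, and the Möbius-map comparison with $k_2^i>0$ excluding the degenerate factorization, reproduces the paper's Cases I, II.A, II.B and the final step with less computation (you do gloss over the merge cases $\mu_j+k_2^i+k_3^i=0$, which the paper treats separately, but these are routine).

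The genuine gap is in Step 3, at exactly the point you flag as ``the main obstacle'' and then do not resolve. When $-\sigma^{i_s}\notin\{\mu_j\}$ you assert that its (nonzero, by Assumption \eqref{ass:main_technical_ass}) coefficient ``can only match $-\tilde\sigma^{i_s}$.'' Nothing in your argument rules out $-\sigma^{i_s}=\tilde\mu_{j''}$ for some $\tilde\mu_{j''}$ that is not among the $\mu_j$, with $-\tilde\sigma^{i_s}$ in turn absorbing one of the true arterial exponents $\mu_{j_0}$; a simple count ($p-1$ surviving arterial exponents plus one region exponent against $\tilde p+1\leq p+1$ tilde exponents) leaves room for precisely this swap. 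Your ordering is therefore circular: you use $\sigma^{i_s}=\tilde\sigma^{i_s}$ to conclude $\mu_{j_0}\in\{\tilde\mu_j\}$, but establishing $\sigma^{i_s}=\tilde\sigma^{i_s}$ in the non-merge case already requires knowing that $-\sigma^{i_s}$ is not one of the $\tilde\mu_j$, i.e., requires control of the very set you are trying to identify. The paper closes this loop differently: a first region pins down all but at most two exponents $\mu_{\hat j},\mu_{j_0}$ (giving $\tilde p\geq p-2$), a second region reduces the ambiguity to a single exponent, and then the hypothesis $\mu_{j_0}=-(\tilde k_2^{i_l}+\tilde k_3^{i_l})$ for three further regions $i_2,i_3,i_4$ with pairwise distinct $k_2^{i_l}+k_3^{i_l}$ is refuted by a pigeonhole argument (either $-(k_2^{i_l}+k_3^{i_l})=\tilde\mu_{j_0}$ or $k_2^{i_l}+k_3^{i_l}=\tilde k_2^{i_l}+\tilde k_3^{i_l}$ must occur twice, and either option forces two of the distinct $k_2^{i_l}+k_3^{i_l}$ to coincide). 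Some argument of this across-region, contradiction-based type is needed; as written, your Step 3 assumes its conclusion.
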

\begin{proof}
	Take $(p,n,((\lambda_j,\mu_j))_{j=1}^p ,((K_1^i,k_2^i,k_3^i))_{i=1}^n,(C^i_\tis)_{i=1}^n,C_\art)$ and $(\tilde p,n,((\tilde \lambda_j,\tilde \mu_j))_{j=1}^{\tilde p},$

	\noindent $((\tilde K_1^i,\tilde k_2^i,\tilde k_3^i))_{i=1}^n,(\tilde C^i_\tis)_{i=1}^n,\tilde C_\art)$ to be two configurations as stated in the proposition, such that in particular
	\begin{equation}
		\label{eq:uniquness_prop_measurment_equality}
		C_\tis^i(t_l) =  \tilde C_\tis^i(t_l)
	\end{equation}
	for $l=1,\ldots,T$.

	Now since $\mu_j \neq 0$ for all $j=1,\ldots,p$, we obtain the following
	representation of $C_\tis^i$:
	\begin{multline}
		\label{ctextended_mu_nonzero}
		C_\tis^i(t) = \frac{K_1^i}{k_2^i+k_3^i}\sum_{j=1}^p\left( \frac{k_3^i}{\mu_j}+\frac{k_2^i}{k_2^i+k_3^i+\mu_j}\mathbf{1}_{\left\{k_2^i+k_3^i+\mu_j\neq 0\right\}}\right)\lambda_je^{\mu_jt}
		\\-\left[\frac{K_1^ik_2^i}{k_2^i+k_3^i}\sum_{\substack{j=1 \\ k_2^i+k_3^i+\mu_j\neq 0}}^p\frac{\lambda_j}{k_2^i+k_3^i+\mu_j}\right]e^{-\left(k_2^i+k_3^i\right)t} -\frac{K_1^ik_3^i}{k_2^i+k_3^i}\sum_{j=1}^p \frac{\lambda_j}{\mu_j} \\
		+\left[\frac{K_1^ik_2^i}{k_2^i+k_3^i}\sum_{\substack{j=1 \\ k_2^i+k_3^i+\mu_j= 0}}^p\lambda_j\right]te^{-\left(k_2^i+k_3^i\right)t} .
	\end{multline}
	In particular, for any region $i \in \{1,\ldots,n\}$, the coefficients of $e^{\mu_j t}$ for $j=1,\ldots,p$ in this representation are given as either
	\[
		\frac{K_1^{i}\lambda_{j}k_3^{i}}{\mu_{j}\left(k_2^{i}+k_3^{i}\right)} \neq 0
	\]
	in case $k_2^{i}+k_3^{i}+\mu_{j} = 0$ or
	\[
		\frac{K_1^{i}\lambda_{j}\left(\mu_{j}+k_3^{i}\right)}{\mu_{j}\left(k_2^{i}+k_3^{i}+\mu_{j}\right)}
	\]
	otherwise. The latter can only be zero if $\mu_j + k_3^{i_0}=0$, which can
	happen for at most one $\hat{j}$ by the $(\mu_j)_j$ being pairwise distinct.
	Since $p\geq 3$ by assumption, this implies in particular that $C_\tis^i$ is a
	non-zero function for any $i$. As a consequence of
	\eqref{eq:uniquness_prop_measurment_equality}, the condition \(T\geq 2(p+3)\) and the unique interpolation
	result of Lemma \ref{lem:polyexpinter}, this implies that $\tilde C_\tis^i$ is
	a non-zero function, such that in particular $\tilde K_1^i\neq 0$ for all $i\in
		\{1,\ldots,n\}$. Together with the assumption that $\tilde k _3^i \neq 0$ for
	all $i$, we also obtain that $\tilde \mu_j \neq 0$ for all $j$, since otherwise
	$\tilde C_\tis^i$ would have a non-zero coefficient of $t$. As a consequence,
	also $\tilde C_\tis^i$ admits a representation as in
	\eqref{ctextended_mu_nonzero}.

	\textbf{Uniqueness of the exponents $(\mu_j)_{j=1}^p$.} As first step, we now aim to show that $\tilde{p} = p$ (in particular $\tilde{\lambda}_j\neq 0$ for all $j$) and that (up to re-indexing) $\mu_j=\tilde{\mu}_j$ for all $j=1,\ldots,p$.

	We start with a region $i_0 \in \{1,\ldots,n\}$. In this region, as argued
	above, the coefficients of the $e^{\mu_j t}$ for $j=1,\ldots,p$ can be zero for
	at most one $\hat{j}$. Since further at most one $j_0$ can be such that
	$\mu_{j_0} = -(\tilde k_2^{i_0} + \tilde k_3^{i_0})$, the unique interpolation
	result of Lemma \ref{lem:polyexpinter} applied to $C_\tis^{i_0}$ and $\tilde
		C_\tis^{i_0}$ yields that $\tilde{p}\geq p-2 \geq 1$ and that (up to
	re-indexing) $\mu_j=\tilde{\mu}_j$ for all $j \notin \{\hat{j},j_0\}$.

	Now as a consequence of Assumption \ref{ass:main_technical_ass}, we can pick a
	region $i_1\neq i_0$ with $k_3^{i_1} \neq k_3^{i_0}$ where $\mu_{j_0} +
		k_3^{i_1}\neq 0$. Since already $\mu_{\hat j} + k_3^{i_0}=0$ it further must
	hold that $\mu_{\hat j} + k_3^{i_1}\neq 0$. This means that the coefficients of
	both $e^{\mu_{\hat{j}}t}$ and $e^{\mu_{j_0}t}$ in the representation of
	$C_\tis^{i_1}$ as in \eqref{ctextended} are non-zero. Again by the $\mu_j$
	being pairwise distinct, this implies that $\tilde{p}\geq p-1$ and that (up to
	re-indexing) either $\mu_{\hat{j}} = \tilde \mu_{\hat{j}}$ or $\mu_{j_0} =
		\tilde \mu_{j_0}$.

	\noindent \textbf{Case I.}  Assume that $\mu_{\hat{j}} = \tilde \mu_{\hat{j}}$. If also $\mu_{j_0}  = \tilde{\mu}_{j_0}$ (and $\tilde{p} \geq p$) we are done with this step, so assume the contrary.

	Now as a consequence of Assumption \ref{ass:main_technical_ass}, we can pick
	$i_2,i_3$ and $i_4$ to be regions where $\mu_{j_0} + k^{i_l}_3 \neq 0$ and
	either $\mu_{j_0} + k_2^{i_l} + k_3^{i_l} = 0$ or the coefficient of
	$e^{-(k_2^{i_l} + k_3^{i_l})t}$ in the representation of $C_\tis^{i_l}$ is
	non-zero.
	The fact that $\mu_{j_0} + k^{i_l}_3 \neq 0$, together with $\mu_{j_0} \neq
		\tilde{\mu}_{j_0}$ by assumption, further yields that $\mu_{j_0} = -(\tilde
		k_2^{i_l} + \tilde k_3^{i_l})$ for all $l=2,3,4$.

	Now we argue that in each region $i_l$ with $l=2,3,4$, it must hold that either
	$-(k_2^{i_l} + k_3^{i_l}) = \tilde\mu_{j_0} $ or $k_2^{i_l} + k_3^{i_l} = \tilde
		k_2^{i_l} + \tilde k_3^{i_l}$. To this aim, we make another case distinction
	for a fixed $l \in \{2,3,4\}$.

	\noindent \textbf{Case I.A}  Assume that there exists $j_l$ with $k_2^{i_l} + k_3^{i_l} + \mu_{j_l} = 0$. From the fact that this can happen at most for one $j_l$ and that $\lambda_{j_l} \neq 0$, it follows that the coefficient of $te^{-(k_2^{i_l} + k_3^{i_l})t}$ is non-zero. Consequently, it follows from the unique interpolation result that $k_2^{i_l} + k_3^{i_l} = \tilde k_2^{i_l} + \tilde k_3^{i_l}$ as claimed.

	\noindent \textbf{Case I.B}  Assume that $k_2^{i_l} + k_3^{i_l} + \mu_{j} \neq 0$ for all $j$. This means that $\tilde{\mu}_j = \mu_j \neq -(k_2^{i_l} + k_3^{i_l})$ for all $j\neq j_0$. But since the coefficient of $e^{-(k_2^{i_l} + k_3^{i_l})t}$ is non-zero, by the unique interpolation result it must hence hold that either $-(k_2^{i_l} + k_3^{i_l}) =  \tilde\mu_{j_0} $ or $k_2^{i_l} + k_3^{i_l} = \tilde k_2^{i_l} + \tilde k_3^{i_l}$ as claimed. This concludes Case I.B.

	Now given that either $-(k_2^{i_l} + k_3^{i_l}) = \tilde\mu_{j_0} $ or
	$k_2^{i_l} + k_3^{i_l} = \tilde k_2^{i_l} + \tilde k_3^{i_l}$ for all
	$l=2,3,4$, one of the two cases must happen at least twice. By uniqueness of
	the $k_2^{i_l} + k_3^{i_l} $ for $l=2,3,4$, only $k_2^{i_l} + k_3^{i_l} =
		\tilde k_2^{i_l} + \tilde k_3^{i_l}$ can happen twice. On the other hand, since
	$\mu_{j_0} = -(\tilde k_2^{i_l} + \tilde k_3^{i_l})$ for all $l=2,3,4$, this
	yields that at least two $k_2^{i_l}+k_3^{i_l}$ coincide, which is a
	contradiction. Hence Case I is complete.

	\noindent \textbf{Case II.}  Assume that $\mu_{j_0} = \tilde \mu_{j_0}$. In this case, interchanging the role of $\mu_{j_0}$ and $\mu_{\hat{j}}$, we can argue that $\mu_{\hat{j}} = \tilde \mu_{\hat{j}}$ exactly as in Case I.

	\textbf{Uniqueness of at least three of the exponents $k_2^{i}+k_3^{i}$.}
	Let $i_0$  be any region such that either $\mu_{j_0} + k_2^{i_0}+ k_3^{i_0} = 0$ for some $j_0 \in \{1,\ldots,p\}$ (such that the coefficient of $t e^{-(k_2^{i_l} + k_3^{i_l})t}$ in the representation of $C_\tis^{i_0}$ is non-zero) or the coefficient of $e^{-(k_2^{i_0} + k_3^{i_0})t}$ in the representation of $C_\tis^{i_0}$ is non-zero, and note that, according to Assumption \ref{ass:main_technical_ass}, at least three such regions exist.

	\noindent \textbf{Case I.} Assume that there exists $j_0 \in \{1,\ldots,p\}$ such that $k_2^{i_0} + k_3^{i_0} + \mu_{j_0} = 0$. This implies that the coefficient of $te^{-(k_2^{i_0} + k_3^{i_0})t}$ is non-zero and, consequently, already that $k_2^{i_0} + k_3^{i_0} = \tilde k_2^{i_0} + \tilde k_3^{i_0}$ by uniqueness of exponents.

	\noindent \textbf{Case II.} Assume that $k_2^{i_0} + k_3^{i_0} + \mu_{j} \neq 0$ for all $j=1,\ldots,p$. Now since then the coefficient of $e^{-(k_2^{i_0} + k_3^{i_0})t}$ is non-zero by assumption, $-(k_2^{i_0} + k_3^{i_0})$ must match some exponent in the representation of $\tilde C^{i_0}_\tis$. It cannot match any of the $\tilde{\mu}_j = \mu_j $ since $k_2^{i_0} + k_3^{i_0} + \mu_{j} \neq 0$ for all $j=1,\ldots,p$, hence again $k_2^{i_0} + k_3^{i_0} = \tilde k_2^{i_0} + \tilde k_3^{i_0}$ follows.

	\textbf{Uniqueness of at least three of the exponents $k_2^i,k_3^i$}.
	First note that for any $i \in \{1,\ldots,n\}$ where $\tilde k_2^{i} + \tilde k_3^{i} = k_2^{i} + k_3^{i} $, from the unique interpolation result, it follows that
	\begin{equation} \label{eq:coefficient_basic_equality}
		K^{i}_1 \lambda_j(\mu_j + k^ {i}_3) = \tilde K^{i}_1 \tilde \lambda_j(\mu_j + \tilde k^ {i}_3)
	\end{equation}
	for all $j=1,\ldots,p$. Indeed, in case $k_2^{i} + k_3^{i} + \mu_j = 0$, it follows from the coefficients of $t e^{-(k_2^{i} + k_3^{i})t}$ in $C_\tis^i$ and $\tilde C_\tis^i$ being equal that
	\[ \frac{K_1^{i} k_2^{i}\lambda_j}{k_2^{i}+k_3^{i}}  = \frac{\tilde K_1^{i} \tilde k_2^{i}\tilde \lambda_j}{k_2^{i}+k_3^{i}} ,
	\]
	which implies that $K_2^{i}k_2^{i} \lambda _j = \tilde K_2^{i} \tilde k_2^{i}
		\tilde \lambda _j$ and, using that $k_2^{i} = -\mu_j - k_3^{i}$ and
	$\tilde{k}_2^{i} = -\mu_j - \tilde{k}_3^{i}$, further yields $K^{{i}}_1
		\lambda_j(\mu_j + k^ {{i}}_3) = \tilde K^{{i}}_1 \tilde \lambda_j(\mu_j +
		\tilde k^ {{i}}_3) $ as claimed.

	In the other case, the equality \eqref{eq:coefficient_basic_equality} follows
	directly from the coefficients of $e^{\mu_j t}$ in $C_\tis^i$ and $\tilde
		C_\tis^i$ being equal.

	Now let $i_0$ be any region where $\tilde k_2^{i_0} + \tilde k_3^{i_0} =
		k_2^{i_0} + k_3^{i_0} $, and for which we want to show that $k_2^{i_0} = \tilde
		k_2^{i_0} $ and $k_3^{i_0} = \tilde k_3^{i_0} $. Again we consider several
	cases.

	\noindent \textbf{Case I.} Assume that there exists $j_0 \in \{1,\ldots,p\}$ such that $ \mu_{j_0} + k_3^{i_0}= 0$. In this case, it follows from \eqref{eq:coefficient_basic_equality} that also $\mu_{j_0} + \tilde k_3^{i_0} = 0$ (note that $\tilde \lambda_{j_0} \neq 0 $ and $\tilde K_1^{i_0} \neq 0$ since $\tilde{p}=p$), hence $k_3^{i_0} = \tilde k_3^{i_0}$ and, consequently, $k_2^{i_0} = \tilde k_2^{i_0}$ holds.

	\noindent \textbf{Case II.} Assume that $ \mu_{j} + k_3^{i_0} \neq 0$ for all $j$. In this case, using Assumption \ref{ass:main_technical_ass} and the previous step, we can select $i_1$ to be a second region where again $\tilde k_2^{i_1} + \tilde k_3^{i_1} = k_2^{i_1} + k_3^{i_1} $ and such that the $k_3^{i_0} \neq k_3^{i_1}$. We have two cases.

	\noindent \textbf{Case II.A} Assume that there exists $j_1 \in \{1,\ldots,p\}$ such that $ \mu_{j_1} + k_3^{i_1}= 0$. As in Case I above, this implies that  $k_3^{i_1} = \tilde k_3^{i_1}$. Further, choosing two indices $j_2,j_3 \in \{1,\ldots,p\}$ such that $j_1,j_2,j_3$ are pairwise distinct, it follows that $ \mu_{j_2} + k_3^{i_1} \neq 0$ and $ \mu_{j_3} + k_3^{i_1} \neq  0$ by the $\mu_j$ being different. Using  \eqref{eq:coefficient_basic_equality} and $k_3^{i_1} = \tilde k_3^{i_1}$ this implies
	\[
		K_1^{i_1}\lambda_{j_1} = \tilde K_1^{i_1} \tilde \lambda_{j_1} \qquad K_1^{i_1}\lambda_{j_2} = \tilde K_1^{i_1} \tilde \lambda_{j_2}
	\]
	Using that the $ \tilde K^{i_1}_1, \tilde K^{i_2}_1$ cannot be zero, these two
	equations imply
	\[ \frac{\tilde \lambda_{j_1}}{\lambda_{j_1}} = \frac{\tilde \lambda_{j_2}}{\lambda_{j_2}}.
	\]
	Combining this with the equations \eqref{eq:coefficient_basic_equality} for
	$i=i_0$ and $j=j_2,j_3$ we obtain
	\[
		\frac{\mu_{j_3}+\tilde k_3^{i_0}}{\mu_{j_3}+k_3^{i_0}} = \frac{\mu_{j_2}+\tilde k_3^{i_0}}{\mu_{j_2}+k_3^{i_0}}
	\]
	Reformulating this equation and using that $\mu_{j_2} \neq \mu_{j_3}$ this
	implies that $k_3^{i_0} = \tilde k_3^{i_0}$ and, consequently, $k_2^{i_0} =
		\tilde k_2^{i_0}$ holds.

	\noindent \textbf{Case II.B} Assume that $ \mu_{j} + k_3^{i_1} \neq 0$ for all $j$. Defining $\Lambda_j = \tilde{\lambda}_j /\lambda_j$, we then obtain from \eqref{eq:coefficient_basic_equality} for pairwise distinct $j_1,j_2,j_3\in\{1,\dots, p\}$ that
	\begin{align}
		\Lambda_{j_1} \frac{\mu_{j_1}+\tilde{k_3^{i_s}}}{\mu_{j_1}+k_3^{i_s}} = \Lambda_{j_2} \frac{\mu_{j_2}+\tilde{k_3^{i_s}}}{\mu_{j_2}+k_3^{i_s}} = \Lambda_{j_3} \frac{\mu_{j_3}+\tilde{k_3^{i_s}}}{\mu_{j_3}+k_3^{i_s}}.\notag
	\end{align}
	for $s=0,1$. From this, we conclude that

	\begin{equation} \label{eq:coeff_eq_inter}
		0= \frac{\mu_{j_r}+\tilde k_3^{i_0}}{\mu_{j_r}+ k_3^{i_0}}\frac{\mu_{j_s}+\tilde k_3^{i_1}}{\mu_{j_s}+ k_3^{i_1}}
		-  \frac{\mu_{j_r}+\tilde k_3^{i_1}}{\mu_{j_r}+ k_3^{i_1}}\frac{\mu_{j_s}+\tilde k_3^{i_0}}{\mu_{j_s}+ k_3^{i_0}}
	\end{equation}
	for $r,s \in \{1,2,3\}$ with $r\neq s$.

	Multiplying \eqref{eq:coeff_eq_inter} with the denominator $(\mu_{j_r}+
		k_3^{i_0})(\mu_{j_s}+ k_3^{i_1})(\mu_{j_r}+ k_3^{i_1})(\mu_{j_s}+ k_3^{i_0})$
	and further dividing by $(\mu_{j_r} - \mu_{j_s})$ we obtain

	\begin{multline*}
		0 = \mu_{j_r}\mu_{j_s}\left(\tilde k_3^{i_0}-\tilde k_3^{i_1} +k_3^{i_1}-k_3^{i_0}\right)+\left(\mu_{j_r}+\mu_{j_s}\right)\left(k_3^{i_1}\tilde k_3^{i_0 }-k_3^{i_0}\tilde k_3^{i_1}\right)\\+\left(k_3^{i_1}-k_3^{i_0}\right)\tilde k_3^{i_0}\tilde k_3^{i_1}+\left(\tilde k_3^{i_0}-\tilde k_3^{i_1}\right)k_3^{i_0} k_3^{i_1}
	\end{multline*}
	for $r,s \in \{1,2,3\}$ with $r\neq s$. Subtracting the above equation for $(r,s) = (1,3)$ from the same equation for $(r,s) = (1,2)$ and dividing by $(\mu_{j_2} - \mu_{j_3})$ we obtain
	\begin{equation} \label{eq:coeff_intermediate_equation}
		0 = \mu_{j_1}\left(\tilde k_3^{i_0}-\tilde k_3^{i_1}+k_3^{i_1}-k_3^{i_0}\right)+\left(k_3^{i_1}\tilde k_3^{i_0}-k_3^{i_0}\tilde k_3^{i_1}\right).
	\end{equation}
	Similarly, subtracting the above equation for $(r,s) = (2,3)$ from the same equation for $(r,s) = (2,1)$  and dividing by $(\mu_{j_1} - \mu_{j_3})$ we obtain
	\begin{equation}
		0 = \mu_{j_2}\left(\tilde k_3^{i_0}-\tilde k_3^{i_1}+k_3^{i_1}-k_3^{i_0}\right)+\left(k_3^{i_1}\tilde k_3^{i_0 }-k_3^{i_0}\tilde k_3^{i_1}\right).
	\end{equation}
	Combining the last two equations and using that $\mu_{j_1} \neq \mu_{j_2}$ we obtain
	\[
		\tilde k_3^{i_0}-k_3^{i_0} = \tilde k_3^{i_1} - k_3^{i_1},
	\]
	i.e., $\tilde k_3^{i_0} = k_3^{i_0} + \epsilon $ and $\tilde k_3^{i_1} =
		k_3^{i_1} + \epsilon $ for $\epsilon\in \R$. Inserting this into
	\eqref{eq:coeff_intermediate_equation} we obtain
	\[ \epsilon(k_3^{i_1} - k_3^{i_0}) = 0  \] which, together with $k_3^{i_1} \neq k_3^{i_0}$, yields $\epsilon = 0$ and
	hence in particular $k_3^{i_0} = \tilde{k}_3^{i_0}$ as desired. Together with
	$k_2^{i_0} + k_3^{i_0} = \tilde{k}_2^{i_0} + \tilde{k}_3^{i_0}$ this yields
	that also $k_2^{i_0} = \tilde{k}_2^{i_0}$.

	\textbf{Uniqueness of the remaining $k_2^i,k_3^i$ and of the $K_1^i$ up to a constant factor.}
	Take $i_0 $ to be a region where $\tilde k_2^{i_0} = k_2^{i_0}$ (we know already that such a region exists). It then follows from \eqref{eq:coefficient_basic_equality} that
	\begin{equation} \label{eq:coefficient_equality_interestimate}
		K^{i_0}_1 \lambda_j = \tilde K^{i_0}_1 \tilde \lambda_j.
	\end{equation}
	for $j=1,\ldots,p$. Thus, with $\zeta := K^{i_0}_1 / \tilde  K^{i_0}_1\neq 0$, we have that $\tilde \lambda_j = \zeta  \lambda_j$ for all $j$. We now aim to show that, for all $i \in \{1,\ldots,n\}$, $k_2^i = \tilde k_2^i$, $k_3^i = \tilde k_3^i$ and $K_1^i = \zeta \tilde K_1^i$.

	Consider $i \in \{1,\ldots,n\}$ fixed. To simplify notation, we drop here the
	index $i$, e.g., we write $K_1 = K_1 ^i$, $k_2 = k_2^i$ and $k_3 = k_3^i$ and
	similar for $\tilde K_1, \tilde k_2, \tilde k_3$.

	In case $k_2 + k_3 + \mu_{j_0} = 0$ for some $j_0$, we know already from the
	previous step that $k_2 = \tilde k_2$ and $k_3 = \tilde k_3$, such that, from
	equating coefficients in the representations of $C_\tis$ and $\tilde C_\tis$,
	we get
	\[
		K_1 \lambda_j =  \tilde K_1 \tilde  \lambda_j= \tilde K_1 \zeta  \lambda_j,
	\]
	such that also $K_1 = \zeta \tilde K_1$ as desired.

	In the other case that $k_2 + k_3 + \mu_j \neq 0$ for all $j$, we get from
	equating coefficients in the representations of $C_\tis$ and $\tilde C_\tis$,
	using $\tilde \lambda_j = \zeta \lambda_j$, that
	\begin{equation} \label{eq:remaining_coefficients_basic_equation}
		\frac{\tilde K_1\zeta  (\mu_j +\tilde  k_3)}{\tilde k_2 + \tilde k_3 + \mu_j} =
		\frac{K_1(\mu_j + k_3)}{ k_2 +  k_3 + \mu_j} := z_j
	\end{equation}
	for $j=1,\ldots,p$, where the $z_j$ are pairwise distinct by the $\mu_j$ being pairwise distinct. Now we show that, from \eqref{eq:remaining_coefficients_basic_equation}, it follows that $\zeta \tilde K_1 = K_1$, $\tilde k_2 = k_2$ and $ \tilde k_3 = k_3$. For this, we again need to distinguish several cases.

	\noindent \textbf{Case I.} $\tilde{k}_3+\mu_{j_0}=0$  for at least one $j_0 \in \{1,2,3\}$. This implies that also $k_3 + \mu_{j_0} =0$ and hence that $\tilde k_3 = k_3$. Considering $j_1,j_2 \in \{1,2,3\} \setminus \{j_0\}$ with $j_1 \neq j_2 $ it follows from the $\mu_j$ being pairwise distinct that $k_3 + \mu_{j_s} \neq 0$ for $s=1,2$, which implies that also $\tilde k_3  + \mu_{j_s} \neq 0$ for $s=1,2$ and, consequently, that
	\begin{equation}
	\label{zeta_tilde_K1_representation}
	\zeta  \tilde K_1 = \frac{z_{j_s}}{\mu_{j_s} + k_3}\tilde k_2 + \frac{z_{j_s}}{\mu_{j_s} + k_3}(\mu_{j_s}+k_3)
	\end{equation}
	for $s=1,2$. Now if $\frac{z_{j_1}}{\mu_{j_1} + k_3} \neq
		\frac{z_{j_2}}{\mu_{j_2} + k_3}$, one may derive $\tilde k_2 = k_2$ by rearranging the terms in \eqref{zeta_tilde_K1_representation} for $s=1, 2$. Hence, by inserting the obtained equalities $\tilde k_2 = k_2$ and $\tilde k_3 =k_3$ in \eqref{eq:remaining_coefficients_basic_equation} we further deduce $\zeta \tilde K_1 = K_1$. If, on the other hand $\frac{z_{j_1}}{\mu_{j_1} +
			k_3} = \frac{z_{j_2}}{\mu_{j_2} + k_3}$ we can plug in the definition of
	$z_{j_1}, z_{j_2}$ and obtain
	\[
		\frac{K_1}{ k_2 +  k_3 + \mu_{j_1}} = \frac{K_1}{ k_2 +  k_3 + \mu_{j_2}},
	\]
	which yields $\mu_{j_1} = \mu_{j_2}$ and hence a contradiction.

	\noindent \textbf{Case II.} $\tilde k_3 + \mu_j \neq 0$ for all $j=1,2,3$. 
In this case we can reformulate \eqref{eq:remaining_coefficients_basic_equation} to obtain
	\begin{equation} \label{eq:remaining_coefficients_reformulated_equation}
		\zeta \tilde K_1  = z_j \frac{\tilde k_2 + \tilde k_3 + \mu_j}{\mu_j +\tilde  k_3}
	\end{equation}
	for all $j=1,2,3$. In particular, this yields
	\[
		z_1 \frac{\tilde k_2 + \tilde k_3 + \mu_1}{\mu_1 +\tilde  k_3} = z_2 \frac{\tilde k_2 + \tilde k_3 + \mu_2}{\mu_2 +\tilde  k_3}.
	\]
	Now if $z_1(\mu_{2} + \tilde{k}_3) =  z_2(\mu_{1} + \tilde{k}_3)$, this implies $\mu_1=\mu_2$ and hence a contradiction. Thus, using that $z_1(\mu_{2} + \tilde{k}_3)\neq
		z_2(\mu_1 + \tilde{k}_3)$ we we can reformulate the previous equation to obtain
	\begin{equation} \label{eq:remaining_coefficient_k2}
		\tilde k_2 = \frac{(z_2-z_1)(\mu_2+\tilde k_3)(\mu_1+\tilde k_3)}{z_1(\mu_2+\tilde k_3) - z_2(\mu_1 + \tilde k_3)}.
	\end{equation}
	Now, in equation \eqref{eq:remaining_coefficients_basic_equation} for $j=3$, replacing $\zeta \tilde K_1$ by the equality \eqref{eq:remaining_coefficients_reformulated_equation} for $j=2$ and plugging in the expression \eqref{eq:remaining_coefficient_k2} for $\tilde{k}_2$ we obtain, after some reformulations,
	\begin{multline} \label{eq:remaining_coefficients_k3_isolated}
		\tilde k_3 [(z_3-z_2)(\mu_2-\mu_1)z_1 - (z_2-z_1)(\mu_3-\mu_2)z_3] = \\ = (z_2-z_1)\mu_1[z_2\mu_3 - z_3 \mu_2] - (z_3-z_2)\mu_3[z_1\mu_2 - z_2 \mu_1].
	\end{multline}
	Using the definition of the $z_j$ in \eqref{eq:remaining_coefficients_basic_equation} we derive that the factor after $\tilde k_3$ in \eqref{eq:remaining_coefficients_k3_isolated} corresponds to the term
	\[
		K_1^2 k_2 \frac{(\mu_2-\mu_1)(\mu_3-\mu_2)(\mu_1-\mu_3)}{(k_2+k_3+\mu_1)(k_2+k_3+\mu_2)(k_2+k_3+\mu_3)}\neq 0
	\]
	which is nonzero by the $\mu_j$ being pairwise distinct. Thus, again plugging in the definition of the $z_j$ in \eqref{eq:remaining_coefficients_basic_equation} and rearranging the terms in \eqref{eq:remaining_coefficients_k3_isolated} yields $\tilde k_3 = k_3$ after some computations and, consequently, also $\tilde k_2 = k_2$ and $\zeta \tilde K_1 = K_1$ by the previous considerations.\\
	
	As a consequence,  the remaining $\zeta \tilde K_1^i,\tilde k_2^i,\tilde k_3^i$ considered in this final part of the proof are uniquely determined as $\zeta \tilde K_1^i = K_1^i, \tilde k_2^i = k_2^i$ and $\tilde k_3^i = k_3^i$
\end{proof}
The previous result shows that, already under knowledge of $C_\tis^i(t_l)$ for $i=1,\ldots,n$ and sufficiently many distinct time-points $t_l$, the coefficients $k_2^i,k_3^i$ and the coefficients $K_1^i$ can be determined uniquely and uniquely up to a constant, respectively. Considering the ODE system \eqref{eq:odesystem}, it is clear that this result cannot be improved in the sense that the constant factor of $K_1^i$ cannot be determined without any knowledge of $C_\art$ (since one can always divide all $K_1^i$ by a constant and multiply $C_\art$ by the same constant).

In case one aims to determine all parameters of a given configuration uniquely,
some additional measurements related to $C_\art$ are necessary. It is easy to
see that a single, non-zero measurement of $C_\art$, for instance, would
suffice. Indeed, given the value of a ground truth $\hat{C}_\art (\hat{s})\neq
	0$ at some time-point $\hat{s}$, the equality $C_\art(\hat{s}) = \hat{C}_\art
	(\hat{s}) = \tilde C_\art(\hat{s}) $ together with the result from Proposition
\ref{prop:main_uniqueness_result} immediately imply that $\zeta=1$ such that
all parameters are uniquely defined.

In current practice, indeed measurements of $C_\art$ are obtained via an
expensive blood-sample analysis, and used for parameter identification, see for
instance \cite{Ver13}. As discussed in the introduction, however, in contrast
to obtaining measurements of $C_\art$, it is much simpler to obtain
measurements of the total concentration $C_\tot$, where $C_\art = f C_\tot$
with the unknown parent plasma fraction $f$.

As the following result shows, measurements of $C_\tot$ only are indeed
sufficient to uniquely identify all remaining parameters, provided that one has
sufficiently many measurements in relation to a parametrization of $f$. To
formulate this, we need a notion of parametrization of the parent plasma
fractions.

\begin{definition}[Parametrized function class for parent plasma fractions] For any $q \in \N$, we say that a set of functions $F_q \subset \{f:\R \rightarrow \R\} $ is a degree-$q$ parametrized set if for any $f,\tilde{f}\in F_q$ and $\lambda \in \R$ it holds that $\lambda f - \tilde{f}$ attaining zero at $q$ distinct points implies that $\lambda=1$ and $f = \tilde{f}$.
\end{definition}
Simple examples of degree-$q$ parametrized sets of functions are polynomials of degree $q-1$ that satisfy $f(x_0) = c$ for some given $x_0,c\in \R $ with $c\neq 0$ or polyexponential functions of degree $q/2$ (if $q$ is even) that satisfy $f(x_0) = c$ for some given $x_0,c\in \R $ with $c\neq 0$. The latter is a frequently used type of parametrization for parent plasma functions (where $f(0)=1$ is required), see for instance \cite{Ver13}.

\begin{proposition} \label{prop:uniqueness_attenuation_maps}
	In the situation of Proposition \ref{prop:main_uniqueness_result}, assume in addition that $f,\tilde{f}:\R \rightarrow \R$ are
	parent plasma fractions contained in the same degree-$q$ parametrized set of functions, and are such that
	\[ C_\art(s_l) = f(s_l)C_\tot(s_l) \text{ and } \tilde C_\art(s_l) = \tilde f(s_l)C_\tot(s_l) \text{ for }l=1,\ldots,q,\]
	with $s_1,s_2,\ldots,s_q$ being $q$ different points, and $C_\tot(s_l)\neq 0$
	given for $l=1,\ldots,q$. Then, all assertions of Proposition
	\ref{prop:main_uniqueness_result} hold with $\zeta=1$, and further
	\[ f = \tilde{f} .\]
\end{proposition}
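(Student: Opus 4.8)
The plan is to combine the conclusions of Proposition~\ref{prop:main_uniqueness_result} with the defining property of a degree-$q$ parametrized set in order to pin down the remaining scalar $\zeta$.

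First I would invoke Proposition~\ref{prop:main_uniqueness_result} directly. Since the two configurations produce identical tissue measurements $C_\tis^i(t_l) = \tilde{C}_\tis^i(t_l)$ for $l=1,\ldots,T$, all of its conclusions hold: namely $k_2^i = \tilde{k}_2^i$, $k_3^i = \tilde{k}_3^i$ for all $i$, and there is a constant $\zeta \neq 0$ with $K_1^i = \zeta \tilde{K}_1^i$, $p = \tilde{p}$, and (up to re-indexing) $\mu_j = \tilde{\mu}_j$ and $\lambda_j = \zeta^{-1}\tilde\lambda_j$, i.e. $\tilde\lambda_j = \zeta\lambda_j$, for all $j$. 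The only thing left to establish is that $\zeta = 1$; once that is known, $\tilde\lambda_j = \lambda_j$ for all $j$, and hence $\tilde{C}_\art = C_\art$ as functions, and all parameters coincide exactly.

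\medskip

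The key observation is that the two arterial functions are related to the \emph{same} total concentration $C_\tot$ through the two parent plasma fractions. Concretely, at each sampling point $s_l$ we have
\[
C_\art(s_l) = f(s_l)\, C_\tot(s_l), \qquad \tilde{C}_\art(s_l) = \tilde{f}(s_l)\, C_\tot(s_l).
\]
Using $\tilde\lambda_j = \zeta\lambda_j$ and $\tilde\mu_j = \mu_j$, the second arterial function satisfies $\tilde{C}_\art = \zeta\, C_\art$ as functions on all of $\R$, and in particular at the points $s_l$. Substituting this into the two displayed equations gives, at each $s_l$,
\[
\zeta\, f(s_l)\, C_\tot(s_l) = \zeta\, C_\art(s_l) = \tilde{C}_\art(s_l) = \tilde{f}(s_l)\, C_\tot(s_l).
\]
Since $C_\tot(s_l) \neq 0$ by hypothesis, I may cancel it to obtain $\zeta\, f(s_l) = \tilde{f}(s_l)$, i.e.\ $(\zeta f - \tilde{f})(s_l) = 0$, for all $l = 1,\ldots,q$. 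Thus the function $\zeta f - \tilde{f}$ vanishes at the $q$ distinct points $s_1,\ldots,s_q$.

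\medskip

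The final step is to apply the defining property of the degree-$q$ parametrized set to the pair $f,\tilde{f}$ (which lie in the same such set $F_q$) with scalar $\zeta$: the fact that $\zeta f - \tilde{f}$ attains zero at $q$ distinct points forces $\zeta = 1$ and $f = \tilde{f}$. This immediately upgrades all conclusions of Proposition~\ref{prop:main_uniqueness_result} to hold with $\zeta = 1$, so that $K_1^i = \tilde{K}_1^i$, $\tilde\lambda_j = \lambda_j$, and consequently $C_\art = \tilde{C}_\art$, completing the proof. I expect no genuine obstacle here: the argument is essentially a bookkeeping exercise that funnels the single remaining degree of freedom $\zeta$ into the one hypothesis designed to eliminate it. The only point requiring a little care is to verify cleanly that $\tilde{C}_\art = \zeta C_\art$ as functions (rather than merely at the $s_l$), which follows because the two polyexponential representations share exponents $\mu_j$ and have proportional coefficients, so that evaluating the relation $\zeta f - \tilde f$ at the sampling points is legitimate.
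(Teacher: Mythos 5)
Your proposal is correct and follows essentially the same route as the paper's proof: invoke Proposition~\ref{prop:main_uniqueness_result} to get $\tilde{C}_\art = \zeta C_\art$, cancel $C_\tot(s_l)\neq 0$ to conclude $(\zeta f - \tilde{f})(s_l)=0$ at the $q$ points, and apply the defining property of the degree-$q$ parametrized set to force $\zeta=1$ and $f=\tilde{f}$. No issues.
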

\begin{proof}
	Proposition \ref{prop:main_uniqueness_result} already implies that $\tilde{C}_\art = \zeta C_\art$. Using that, by assumption,
	\[   \zeta f(s_{l})C_\tot(s_{l})  = \zeta C_\art(s_{ l}) = \tilde C_\art(s_{l}) = \tilde f(s_{ l})C_\tot(s_{ l}),\]
	we obtain $(\zeta f - \tilde{f})(s_l) = 0$ for $l=1,\ldots,q$. Since
	$f,\tilde{f}:\R \rightarrow \R$ are parent plasma fractions contained in the
	same degree-$q$ parametrized set, this implies that $\zeta = 1$ and $f =
		\tilde{f}$ as claimed.
\end{proof}

The following theorem now summarizes results of the previous two propositions
in view of practical applications.
\begin{theorem} \label{thm:main_uniqueness} Let $(p,n,((\lambda_j,\mu_j))_{j=1}^p ,((K_1^i,k_2^i,k_3^i))_{i=1}^n,(C^i_\tis)_{i=1}^n,C_\art)$
	be a ground-truth \\ configuration of the irreversible two tissue compartment model such that
	\begin{enumerate}
		\item $p \geq 3$, $n\geq 3$ and  $K_1^i,k_2^i,k_3^i > 0$ for all $i=1,\ldots,n$,
		\item There are at least $p+3$ regions $i_1,\ldots,i_{p+3}$ where each the
		      $k_3^{i_s}$ and the $k_2^{i_s} + k_3^{i_s}$ are pairwise distinct for
		      $s=1,\ldots,p+3$.
	\end{enumerate}
	Let further be $C_\tot:[0,\infty) \rightarrow [0,\infty)$ be the ground truth total concentration.

	Then, for any other parameter configuration $(\tilde p,n,((\tilde
		\lambda_j,\tilde \mu_j))_{j=1}^{\tilde p} ,((\tilde K_1^i,\tilde k_2^i,\tilde
		k_3^i))_{i=1}^n,(\tilde C^i_\tis)_{i=1}^n,$ $\tilde C_\art)$ such that the
	conditions 1) and 2) above also hold, it follows from
	\[C_\tis(t_l) = \tilde C_\tis(t_l) \quad \text{for }l=1,\ldots, T\]
	with $T \geq \max\{ 2(p+3),2(\tilde p + 3)\}$ and the $t_1,\ldots,t_T$ pairwise
	distinct, that, for some constant $\zeta\neq 0$,
	\[  K_1^i = \zeta \tilde K_1^i,  k_2^i =\tilde k_2^i \text{ and }  k_3^i =\tilde k_3^i\text{ for all }i=1,\ldots,n,
	\]
	that $p= \tilde{p}$, and that (up to re-indexing)
	\[ \tilde \mu_j = \mu_j \text{ and } \tilde \lambda_j =\zeta  \lambda_j \text{ for all }i=1,\ldots,p.
	\]
	If further $f:[0,\infty) \rightarrow [0,\infty)$ is a ground-truth parent
	plasma fraction in a degree-$q$ \\ parametrized set of functions and $\tilde
		f:[0,\infty) \rightarrow [0,\infty)$ is a parent plasma fraction in the same
	degree-$q$ parametrized set of functions such that
	\[ C_\art(s_l) = f(s_l)C_{\tot}(s_l) \text{ and } \tilde C_\art(s_l) = \tilde f(s_l)C_{\tot}(s_l) \text{ for }l=1,\ldots,q,\]
	with the $s_1,\ldots,s_q$ pairwise distinct and $C_{\tot}(s_l)\neq 0$ given,
	then $\zeta=1$ and
	\[ f = \tilde{f}.
	\]
	\begin{proof}
		This is an immediate consequence of Lemma \ref{lem:simpler_version_of_main_assumption} and Proposition \ref{prop:main_uniqueness_result}: Indeed, Lemma \ref{lem:simpler_version_of_main_assumption} ensures that the assumptions of Proposition \ref{prop:main_uniqueness_result} are satisfied provided that 1.) and 2.) hold. In case $\tilde{p}\leq p$ the result immediately follows from Propositions \ref{prop:main_uniqueness_result} and \ref{prop:uniqueness_attenuation_maps}. In case $\tilde{p}> p$ it follows from interchanging the roles of the two configurations and again applying Propositions \ref{prop:main_uniqueness_result} and \ref{prop:uniqueness_attenuation_maps}.
	\end{proof}
\end{theorem}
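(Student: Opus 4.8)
The plan is to reduce the theorem entirely to the three earlier results — Lemma \ref{lem:simpler_version_of_main_assumption}, Proposition \ref{prop:main_uniqueness_result}, and Proposition \ref{prop:uniqueness_attenuation_maps} — since the theorem is essentially a repackaging of these into hypotheses that are convenient to check in practice. First I would verify that the technical Assumption \eqref{ass:main_technical_ass} holds for the ground-truth configuration: condition 2) guarantees at least $p+3$ regions in which the $k_3^{i_s}$ and the $k_2^{i_s}+k_3^{i_s}$ are pairwise distinct, which is exactly the hypothesis of Lemma \ref{lem:simpler_version_of_main_assumption}, so that lemma yields \eqref{ass:main_technical_ass}. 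Together with condition 1), all standing assumptions of Proposition \ref{prop:main_uniqueness_result} are then in force.

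Next I would treat the generic case $\tilde p \leq p$. Here the comparison configuration satisfies the remaining hypotheses required of the ``tilde'' configuration in Proposition \ref{prop:main_uniqueness_result}, namely $\tilde k_3^i \neq 0$ and $\tilde k_2^i + \tilde k_3^i \neq 0$, both of which follow from condition 1) applied to the tilde configuration since its rate constants are positive. As $T \geq 2(p+3)$ by the $\max$ in the hypothesis, and $C_\tis^i(t_l) = \tilde C_\tis^i(t_l)$ for all $l$, Proposition \ref{prop:main_uniqueness_result} applies directly and yields $k_2^i = \tilde k_2^i$, $k_3^i = \tilde k_3^i$, $p = \tilde p$, and a constant $\zeta \neq 0$ with $K_1^i = \zeta \tilde K_1^i$ and, up to re-indexing, $\mu_j = \tilde \mu_j$ and $\tilde \lambda_j = \zeta \lambda_j$. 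This is already the first block of conclusions. For the statement on $f$, I would invoke Proposition \ref{prop:uniqueness_attenuation_maps}, whose hypotheses are precisely the $C_\tot$-measurement conditions now added; it forces $\zeta = 1$ and $f = \tilde f$.

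The one point requiring care — and the only genuine obstacle — is that Proposition \ref{prop:main_uniqueness_result} is stated asymmetrically, requiring $\tilde p \leq p$, whereas the theorem treats the two configurations symmetrically. In the remaining case $\tilde p > p$ I would simply interchange the roles of the two configurations: hypotheses 1) and 2) are assumed for both, and the bound $T \geq \max\{2(p+3),\, 2(\tilde p + 3)\}$ is exactly what guarantees enough time points to run Proposition \ref{prop:main_uniqueness_result} in the swapped direction, now with $p \leq \tilde p$. The swapped application produces a constant $\zeta'$ with $\tilde K_1^i = \zeta' K_1^i$ and $\lambda_j = \zeta' \tilde \lambda_j$; setting $\zeta = 1/\zeta'$ recovers precisely the asserted relations, while the equalities of the $k_2^i, k_3^i$ and $\mu_j$ are symmetric and hence unaffected. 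A final application of Proposition \ref{prop:uniqueness_attenuation_maps}, which is likewise symmetric in the two configurations, then pins down $\zeta = 1$ and $f = \tilde f$, completing the argument.
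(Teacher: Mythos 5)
Your proposal is correct and follows essentially the same route as the paper: verify Assumption (A) via Lemma \ref{lem:simpler_version_of_main_assumption}, apply Propositions \ref{prop:main_uniqueness_result} and \ref{prop:uniqueness_attenuation_maps} when $\tilde p \leq p$, and swap the roles of the two configurations when $\tilde p > p$. Your additional remark that the swapped application yields $\zeta'$ with $\zeta = 1/\zeta'$ is a useful explicit detail the paper leaves implicit, but it does not change the argument.
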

\begin{remark}[Interpretation for practical application]
	Besides putting some basis assumptions on the ground truth-configuration and requiring positivity of the metabolic parameters, the previous theorem can be read as follows: If one obtains a configuration that matches the measured data, it can be guaranteed to coincide with ground-truth configuration if at least $\tilde{p}+3 $ of the found terms $\tilde k_2^i +  \tilde k_3^i$ and $\tilde k_3^i$ are pairwise distinct.

\end{remark}
\begin{remark}[Generalization for nontrivial fractional blood volume]
Following standard approaches in quantitative PET modeling, we assume here that the PET images provide exactly the tissue concentration $C_\tis$. A more realistic model would be that the voxel measurements provide a convex combination of the tissue and blood tracer concentration given by $C_\pet(t) = (1-\fbv)\cdot C_\tis(t)+\fbv\cdot C_\tot(t)$, where fbv with $0\leq \fbv\leq 0.05$ describes the fractional blood volume. In case the parameter $\fbv$ is known and $C_\tot$ is available at the same time points as the PET image measurements, our results cover also this setting. The general case, where both $\fbv$ and $C_\tot$ are unavailable, can be addressed by similar techniques as in the proof of Proposition \ref{prop:main_uniqueness_result}. Here, the idea would be to employ a polyexponential parametrization also for $C_\tot$, and assuming enough measurements of $C_\pet$ to be available in order to apply the unique interpolation result of Lemma \ref{lem:polyexpinter}. One would further have to ensure positivity of $C_\tot$, the intial condition $C_\tot(0)=0$ and conditions on the attenuation $f = C_\art/C_\tot$ such as monotonicity and limiting conditions with respect to time approaching zero and infinity, respectively. These requirements imply corresponding conditions on the parameters of $C_\art$ and $C_\tot$. 
\end{remark}

\section{A Tikhonov approach for parameter identification with noisy data}
\label{sec:tikhonov_parameter_id}
In the previous section we have established that, under appropriate conditions, the parameters $(K_1^i,k_2^i,k_3^i)$ of the irreversible two tissue compartment model
in regions $i=1,\ldots,n$ can be obtained uniquely from measurements $C_\tis(t_i)$, $i=1,\ldots,T$ and measurements $C_\tot(s_i)$ for $i=1,\ldots,q$. While this result shows that parameter identification is possible in principle, it considers the idealized scenario of exact measurements. In this section, we consider the situation of noisy measurements, for which we develop a Tikhonov approach for stable parameter identification. As main analytic results of this section, we show i) a stability result, i.e., that the proposed Tikhonov approach is stable with respect to (noise) variations in the measurements (see Theorem \ref{thm:well_posedness}) and ii) a consistency result, i.e., that in the limit of vanishing noise, solutions of the Tikhonov approach converge to the ground truth parameters (see Theorem \ref{thm:consistency}).

As first step, we define a forward model that maps the unknown parameters to
the available measurement data. To this aim, we define the arterial
concentration as mapping
\begin{equation} \label{eq:CA_parametriziation_tikhonov}
	\begin{aligned}
		C_\art:~  \mathbb{R}^p\times\mathbb{R}^p & \rightarrow \mathcal{P}_p\notag                                      \\
		\left(\lambda, \mu\right)                & \mapsto \left[ t \mapsto \sum_{i=1}^p \lambda_i e^{\mu_i t} \right].
	\end{aligned}
\end{equation}
Further, we define a parametrized parent plasma fraction as mapping
\begin{equation}
	\begin{aligned}
		\label{fglobal}
		f: & ~ \Mc \subset \R^{\hat{q}}\to F_q \\ %
		   & ~  m\mapsto f_m.
	\end{aligned}
\end{equation}
where $\Mc \subset \R^{\hat{q}}$ is some (finite dimensional) parameter space and $F_q$ is a degree-$q$ parametrized set of functions.

\begin{remark}[Parent plasma fraction example] \label{rem:attenuation_map_example} A classical model for the parent plasma fraction (see \cite{Ton15} for different models), that we will also use in our numerical experiments below, is the \emph{biexponential model}
	\[
		f\left(t\right) = Ae^{\xi_1 t}+\left(1-A\right) e^{\xi_2 t} ~ ~ ~ \text{ for } ~ ~ t\geq 0.
	\]
	Here \(\mathcal{M} = \left[0,\infty\right)\times\left(-\infty,0\right]^2\) and
	the degree of $F_q$ is $q=4$.
\end{remark}
In addition to the parameters of the functions modeling the arterial concentration and the parent plasma fraction, the forward model also includes the parameters $\Kb^i = (K_1^i,k_2^i,k_3^i)$ for $i=1,\ldots,n$ compartments. With this, the unknown parameters are summarized by $(\lambda,\mu,m,\Kb^i,\ldots,\Kb^n)$ and we denote by $X = \R^p \times \R^p \times \R^{\hat q} \times \R^{3 \times n}$ the resulting parameter space with norm
\[
	\Vert\left(\lambda, \mu, m, \textbf{K}^1, \dots, \textbf{K}^n\right)\Vert_X^2:=\sum_{j=1}^p\left(\vert\lambda_j\vert^2+\vert\mu_j\vert^2\right)+\Vert m\Vert_2^2+\sum_{i=1}^n\Vert\textbf{K}^i\Vert_2^2,
\]
where $\|\cdot \|_2$ denotes the Euclidean norm. Given measurement points
$t_1,\ldots,t_T$ for $C_\tis$ and $s_1,\ldots,s_q$ for the total concentration
$C_\tot$, those parameters are mapped forward to a measurement space $Y =
	\R^{n\times T+q}$, again equipped with the Euclidean norm $\|\cdot \|_Y = \|
	\cdot \|_2$ via the function
\begin{equation} \label{eq:forward_model_full}
	\begin{aligned}
		F: \mathcal{D}\left(F\right):=\mathbb{R}^p\times\mathbb{R}^p \times \mathcal{M}\times \left[\epsilon, \infty\right)^{3\times n}\subseteq X & \rightarrow Y           \\
		x                                                                                                                                          & \mapsto (F^1(x),F^2(x))
	\end{aligned}
\end{equation}
where, for $x = (\lambda,\mu,m, \mathbf{K}^1, \dots, \mathbf{K}^n)$
\begin{equation} \label{eq:forward_model_part_1}
	F^1(x) =
	\begin{pmatrix}
		C_\tis\left(C_\art\left(\lambda,\mu\right), \textbf{K}^1, \right)(t_1) & \dots  & C_\tis\left(C_\art\left(\lambda,\mu\right), \textbf{K}^1, \right)(t_T) \\
		\vdots                                                                 & \ddots & \vdots                                                                 \\
		C_\tis\left(C_\art\left(\lambda,\mu\right), \textbf{K}^n, \right)(t_1) & \dots  & C_\tis\left(C_\art\left(\lambda,\mu\right), \textbf{K}^n, \right)(t_T)
	\end{pmatrix}
\end{equation}
and
\begin{equation}\label{eq:forward_model_part_2}
	F^2(x) = 	 \begin{pmatrix}
		C_\tot\left(s_1\right)f_m\left(s_1\right)-C_\art\left(\lambda,\mu\right)\left(s_1\right) & \dots & C_\tot\left(s_q\right)f_m\left(s_q\right)-C_\art\left(\lambda,\mu\right)\left(s_q\right)
	\end{pmatrix}.
\end{equation}
Here $C_\tis\left(C_\art\left(\lambda,\mu\right), \textbf{K}^i,\right)$ denotes the solution of the irreversible two tissue compartment ODE model \eqref{eq:odesystem} with parameters $\Kb^i$ and arterial concentration $C_\art\left(\lambda,\mu\right)$.
Note that $F^2$ depends on the data $C_\tot$ that must be obtained from blood samples or PET measurements, which we assume to be given throughout this section. A further adaption of the model to include also $C_\tot$ as possibly noise measurement is possible with the same techniques as below, but will be omitted for the sake of simplicity.

Now denoting by $\hat{C}^i_\tis(t_1),\ldots,\hat{C}^i_\tis(t_T)$ for
$i=1,\ldots,n$ measurements corresponding to the ground-truth parameters, our
goal is to find parameters $(\lambda,\mu,m,\Kb^1,\ldots,\Kb^n)$ such that
\[ F(\lambda,\mu,m,\Kb^1,\ldots,\Kb^n) = \begin{pmatrix}
		\hat C_\tis^1\left(t_1\right) & \dots  & \hat C_\tis^1\left(t_T\right) \\
		\vdots                        & \ddots & \vdots                        \\
		\hat C_\tis^n\left(t_1\right) & \dots  & \hat C_\tis^n\left(t_T\right)
	\end{pmatrix} \times (0,\ldots,0) \in \R^{n \times T} \times \R^q.
\]
Accounting for the fact that the given parameters are perturbed by measurement
noise, i.e., we are actually given $(C^i_\tis)^\delta(t_l) $ with
\[ \sum_{i=1}^n \sum_{l=1}^T \|(C^i_\tis)^\delta(t_l) - \hat C^i_\tis(t_l) \|_2 ^2 \leq \delta ,\]
we address the parameter identification problem via a minimization problem of
the form
\begin{multline}
	\label{eq:tikhonov_main_min_prob}
	\min_{\left(\lambda,\mu,m, \mathbf{K}^1, \dots, \mathbf{K}^n\right)\in \mathcal{D}\left(F\right)} \Vert F\left(\lambda,\mu,m, \mathbf{K}^1, \dots, \mathbf{K}^n\right)- (C_\tis^\delta,0)\Vert_Y^2 \\ +\alpha\Vert \left(\lambda,\mu,m, \mathbf{K}^1, \dots, \mathbf{K}^n\right) - \left(\bar \lambda,\bar \mu,\bar m, \mathbf{\bar K}^1, \dots, \mathbf{\bar K}^n\right) \Vert_X^2.
\end{multline}
Here $0 \in \R^q$ is a $q$-dimensional vector of zeros, $C_\tis^\delta$ summarizes the available measurements for $C_\tis^\delta$, i.e.,
\[  C^\delta _\tis = \begin{pmatrix}
		(C_\tis^1)^\delta\left(t_1\right) & \dots  & ( C^1_\tis)^\delta\left(t_T\right) \\
		\vdots                            & \ddots & \vdots                             \\
		(C^n_\tis)^\delta\left(t_1\right) & \dots  & (C_\tis^n)^\delta\left(t_T\right)
	\end{pmatrix}.
\]
and $\left(\bar \lambda,\bar \mu,\bar m, \mathbf{\bar K}^1, \dots, \mathbf{\bar
		K}^n\right) $ is an initial guess on the ground truth parameters. The above
approach corresponds to \emph{Nonlinear Tikhonov-Regularisation}, for which
stability and consistency results can be ensured as follows.

\begin{theorem}[Well-posedness and stability] \label{thm:well_posedness}
	Let the parent plasma fractions $f_m$ be such that the mapping $m \mapsto f_m(t)$ is continuous for any $t \in [0,\infty) $. Then, for any given datum $C_\tis^\delta$, the minimization problem \eqref{eq:tikhonov_main_min_prob} admits a solution. Moreover, solutions are stable in the sense that, if $(C_\tis^{\delta_k})_k$ is a sequence of data converging to some datum $C_\tis^{\delta}$, then, any sequence of solutions $(x^k)_k$ of \eqref{eq:tikhonov_main_min_prob} with data $(C_\tis^{\delta_k})_k$ admits a convergent subsequence, and the limit of any convergent subsequence $x$ is a solution of \eqref{eq:tikhonov_main_min_prob} with data  $C_\tis^{\delta}$.
	\begin{proof}
		Since $X$ and $Y$ are finite dimensional and $D(F)$ is obviously closed, this follows from classical results in regularization theory, see for instance \cite[Theorem 10.2]{Engl2000} provided that $F$ is continuous.

		We start with continuity of $F^1$ as in \eqref{eq:forward_model_part_1}, the
		first component of $F$. For this, it suffices to show that the mapping from the
		the parameter $(\lambda,\mu, \mathbf{K}^1, \dots, \mathbf{K}^n)$ to
		$C_\tis^i(t)$, with $t \in [0,\infty)$ fixed, is continuous, which, in turn,
		follows from the representation of $C_\tis^i(t)$ as in
		\eqref{eq:ct_representation_general} if, for any $g \in L^2(0,t)$ and any
		sequence $(\lambda^l,\mu^l)_l$ converging to $(\lambda,\mu)$ it holds that
		\begin{equation}
			\label{pweakconv}
			\int_0^tg\left(s\right)\left(C_\art\left(\lambda^l,\mu^l\right)-C_\art\left(\lambda,\mu\right)\right)\left(s\right)\dx s \to 0 ~ ~ ~ \text{ as } ~ ~ l\to\infty.
		\end{equation}
		By Hölder's inequality, the latter follows from \(C_\art\left(\lambda^l,\mu^l\right)\to C_\art\left(\lambda, \mu\right)\) in \(L^2\left(0,T_{\text{max}}\right)\), which, in turn, follows via the Lebesgue dominated convergence theorem from point-wise convergence of $C_\art\left(\lambda^l,\mu^l\right)$ and the fact that $|C_\art\left(\lambda^l,\mu^l\right)|$ on $[0,t]$ can easily be bounded by a constant independent of $l$.

		Regarding $F^2$ as in \eqref{eq:forward_model_part_2}, the second component of
		$F$, continuity immediately follows from continuity of $(\lambda,\mu) \mapsto
			C_\art(\lambda,\mu)(t)$ and $m \mapsto f_m(t)$ for any $t \in [0,\infty)$
		fixed, where the latter holds by assumption.
	\end{proof}
\end{theorem}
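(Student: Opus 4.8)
The plan is to recognize \eqref{eq:tikhonov_main_min_prob} as a standard nonlinear Tikhonov regularization problem posed in finite dimensions, and to reduce both assertions to a single property of the forward operator, namely continuity of $F$. Since $X$ and $Y$ are finite-dimensional Euclidean spaces, the domain $\Dc(F) = \R^p \times \R^p \times \Mc \times [\epsilon,\infty)^{3\times n}$ is closed, and the penalty is a squared Euclidean distance (hence coercive and, being continuous, in particular weakly lower semicontinuous), the abstract hypotheses of the classical existence-and-stability theory for Tikhonov minimizers reduce to continuity of $F$; once this is established, both existence of a minimizer and the asserted subsequential stability follow verbatim, see for instance \cite[Theorem 10.2]{Engl2000}. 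I would therefore first state this reduction and then verify continuity of $F = (F^1,F^2)$ componentwise.

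For $F^2$ from \eqref{eq:forward_model_part_2} continuity is immediate: each entry is $C_\tot(s_l)f_m(s_l) - C_\art(\lambda,\mu)(s_l)$, where $C_\tot(s_l)$ is a fixed scalar, $m \mapsto f_m(s_l)$ is continuous by hypothesis, and $(\lambda,\mu) \mapsto C_\art(\lambda,\mu)(s_l) = \sum_{j=1}^p \lambda_j e^{\mu_j s_l}$ is a finite sum of products of continuous functions and hence continuous. The substantive step is continuity of $F^1$ from \eqref{eq:forward_model_part_1}, i.e. of the map $(\lambda,\mu,\Kb^i) \mapsto C_\tis^i(t_l)$ for each fixed $t_l$. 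Here I would invoke the explicit representation \eqref{eq:ct_representation_general} from Lemma \ref{lem:ode_solution_general}. On $\Dc(F)$ one has $k_2^i + k_3^i \geq 2\epsilon > 0$, so the scalar prefactors $\frac{K_1^i k_2^i}{k_2^i+k_3^i}$ and $\frac{K_1^i k_3^i}{k_2^i+k_3^i}$, as well as the factor $e^{-(k_2^i+k_3^i)t_l}$, are continuous in $\Kb^i$. It then remains to establish continuity of the two integrals $\int_0^{t_l} e^{(k_2^i+k_3^i)s}C_\art(\lambda,\mu)(s)\dx s$ and $\int_0^{t_l}C_\art(\lambda,\mu)(s)\dx s$ jointly in all parameters.

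The only point requiring genuine care is this last one, because $C_\art$ depends on the exponents $\mu_j$ in a genuinely nonlinear way, so continuity cannot be read off from linearity. The clean way to handle it is as follows: along a convergent sequence of parameters the corresponding integrands converge pointwise on $[0,t_l]$ and, since all parameters then lie in a bounded set, are dominated by a fixed constant on the compact interval $[0,t_l]$, so the dominated convergence theorem yields convergence of the integrals. Equivalently one may first show $C_\art(\lambda^l,\mu^l) \to C_\art(\lambda,\mu)$ in $L^2(0,T_{\max})$ with $T_{\max} = \max_l t_l$ and then pass to the integrals via H\"older's inequality. Since sums and products of continuous maps are continuous, this gives continuity of each $C_\tis^i(t_l)$ and hence of $F^1$; together with the continuity of $F^2$ and the reduction above, the well-posedness and stability claims follow. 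Everything outside this dominated-convergence argument is routine bookkeeping.
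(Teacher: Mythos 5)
Your proposal is correct and follows essentially the same route as the paper: reduce both claims to continuity of $F$ via the classical finite-dimensional Tikhonov theory, handle $F^2$ directly, and establish continuity of $F^1$ through the representation \eqref{eq:ct_representation_general} with a dominated-convergence (equivalently, $L^2$-convergence plus H\"older) argument for the integrals. The only addition is your explicit remark that $k_2^i+k_3^i\geq 2\epsilon$ on $\Dc(F)$ keeps the prefactors well-defined, which the paper leaves implicit.
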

\begin{remark}[Continuity of $m\mapsto f_m(t)$]
	Note that the assumption of continuity of $m\mapsto f_m(t)$ is only necessary since we allow for arbitrarily parametrized parent plasma fractions; it holds in particular for the biexponential model of Remark \ref{rem:attenuation_map_example} and will typically hold for any reasonable parametrization.
\end{remark}

At last in this section we now establish a consistency result.
\begin{theorem}[Consistency] \label{thm:consistency}
	Let $(\hat p,n,((\hat \lambda_j,\hat \mu_j))_{j=1}^{\hat p} ,((\hat K_1^i,\hat k_2^i,\hat k_3^i))_{i=1}^n,(\hat C^i_\tis)_{i=1}^n,\hat C_\art)$ be a ground-truth configuration of the irreversible two tissue compartment model
	satisfying the assumptions of Theorem \ref{thm:main_uniqueness}, and let $f_{\hat m}\in \Mc$ be a ground-truth
	parent plasma fraction.

	With $\hat x = (\hat \lambda,\hat \mu,\hat m, \mathbf{\hat K}^1, \dots,
		\mathbf{\hat K}^n)$ the corresponding parameters and $\hat y := F(\hat{x})$ the
	corresponding measurement data, let $y^{\delta_k}$ be any sequence of noisy
	data such that $\|\hat{y}-y^{\delta_k} \| \leq \delta_k$ with $\delta_k >0$,
	$\lim_{k\rightarrow \infty} \delta_k = 0$.

	Then, any sequence of solutions $(x_k)_k$ of \eqref{eq:tikhonov_main_min_prob}
	with data $y^\delta = y^{\delta_k}$ and $\alpha = \alpha_k$ such that $\alpha_k
		\rightarrow 0$ and $\delta_k^2/\alpha_k \rightarrow 0$ as $k \rightarrow 0$
	admits a convergent subsequence. Any limit $ x = ( \lambda, \mu, m, \mathbf{
			K}^1, \dots, \mathbf{ K}^n)$ of such a subsequence, such that the corresponding
	parameter configuration satisfies the assumptions of Theorem
	\ref{thm:main_uniqueness}, coincides with $\hat{x}$. Further, if any limit of a
	convergent subsequence corresponds to a parameter configuration satisfying the
	assumptions of Theorem \ref{thm:main_uniqueness}, then the entire sequence
	$(x_k)_k$ converges to $\hat{x}$.
	\begin{proof}
		This is a consequence of Theorem \ref{thm:main_uniqueness}, which ensures that there is a unique $x \in X$ with $F(x) = \hat y$, and of classical results from regularization theory, see for instance \cite[Theorem 10.3]{Engl2000}.
	\end{proof}
\end{theorem}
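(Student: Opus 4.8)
The plan is to run the classical consistency argument for nonlinear Tikhonov regularization, using Theorem~\ref{thm:main_uniqueness} as the identifiability input that pins down the limit. First I would exploit the minimality of each $x_k$: since the ground-truth parameter $\hat x$ lies in the closed domain $\mathcal{D}(F)$ and is therefore admissible, comparing the value of the Tikhonov functional at $x_k$ with its value at $\hat x$ and using $\|F(\hat x)-y^{\delta_k}\|_Y=\|\hat y - y^{\delta_k}\|_Y\leq \delta_k$ gives
\[
\|F(x_k)-y^{\delta_k}\|_Y^2+\alpha_k\|x_k-\bar x\|_X^2\leq \delta_k^2+\alpha_k\|\hat x-\bar x\|_X^2 .
\]
Dividing by $\alpha_k$ and using $\delta_k^2/\alpha_k\to 0$ bounds $\|x_k-\bar x\|_X$, so $(x_k)_k$ is bounded in the finite-dimensional space $X$ and hence has a convergent subsequence, which is the first assertion. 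Discarding the nonnegative regularization term in the same inequality further yields $\|F(x_k)-y^{\delta_k}\|_Y^2\leq \delta_k^2+\alpha_k\|\hat x-\bar x\|_X^2\to 0$.

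Next I would identify the limit of any convergent subsequence $x_{k_j}\to x$. Closedness of $\mathcal{D}(F)$ gives $x\in\mathcal{D}(F)$, and continuity of $F$ (established in the proof of Theorem~\ref{thm:well_posedness}) together with $y^{\delta_{k_j}}\to \hat y$ and the residual bound above yields $F(x)=\hat y$. Reading off the two blocks of $F$, this means that the tissue measurements of the configuration of $x$ match those of the ground truth and, through $F^2(x)=0=F^2(\hat x)$ in \eqref{eq:forward_model_part_2}, that both $C_\art$ and $\hat C_\art$ satisfy the $C_\tot$-relation $C_\art(s_l)=f_m(s_l)C_\tot(s_l)$ at the sampling points. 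Hence, whenever the configuration associated with $x$ satisfies the hypotheses of Theorem~\ref{thm:main_uniqueness}, that theorem applies to the pair of configurations and forces $\zeta=1$, componentwise equality of all metabolic parameters, exponents and coefficients, and $f_m=f_{\hat m}$; when the parametrization $m\mapsto f_m$ is injective this additionally gives $m=\hat m$, so that $x=\hat x$.

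The convergence of the entire sequence then follows by a subsequence-of-subsequence argument: were $(x_k)_k$ not to converge to $\hat x$, some subsequence would satisfy $\|x_k-\hat x\|_X\geq\varepsilon$ for a fixed $\varepsilon>0$; by the a priori bound this subsequence is bounded, hence has a further convergent subsequence whose limit equals $\hat x$ by the previous paragraph, contradicting the separation. I expect the only genuinely delicate point to be the transfer of identifiability to the limit: the hypotheses of Theorem~\ref{thm:main_uniqueness} --- strict positivity of $K_1^i,k_2^i,k_3^i$, pairwise distinctness of the $k_3^i$ and $k_2^i+k_3^i$, and the count of distinct regions --- are non-closed conditions that may fail under taking limits. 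This is precisely why the statement qualifies the limit by the clause that its configuration still satisfies these assumptions, and why the uniqueness engine must be invoked at the limit point itself rather than propagated along the sequence. With this caveat handled by hypothesis, the remaining steps are routine and fit the template of \cite[Theorem~10.3]{Engl2000}.
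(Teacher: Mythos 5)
Your proof is correct and takes essentially the same route as the paper, whose proof simply cites Theorem \ref{thm:main_uniqueness} for uniqueness of the preimage of $\hat y$ together with the classical consistency result \cite[Theorem 10.3]{Engl2000}; you have merely unpacked that citation into the standard minimality comparison, boundedness and subsequence extraction, limit identification via continuity of $F$, and the subsequence-of-subsequences argument, and you correctly locate the only delicate point in the non-closedness of the identifiability hypotheses, which the theorem handles by assumption on the limit. Your remark that recovering $m=\hat m$ from $f_m=f_{\hat m}$ requires injectivity of the parametrization $m\mapsto f_m$ is a valid refinement that the paper leaves implicit.
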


\begin{remark}[Interpretation of the consistency result] When choosing $p\geq 3$ and $n\geq 3$, and given the definition of $\Dc(F)$ as in \eqref{eq:forward_model_full}, the above consistency result together with the unique reconstructability result of Theorem \ref{thm:main_uniqueness} can be interpreted as follows: Whenever the parameters $(K_1^i,k_2^i,k_3^i)_{i=1}^n$ corresponding to a limit $x$ of $(x_k)_k$ are such that at least $p+3$ of the parameters $k^i_3$ and $k^i_2 + k_3^i$ are pairwise distinct, then one can ensure that $x = \hat{x}$.
\end{remark}

\begin{remark}[Multi-parameter regularization]
	The setting of \eqref{eq:tikhonov_main_min_prob} and the subsequent results on well-posedness and consistency can be generalized to incorporating different regularization parameters for the different norms and data terms, see for instance \cite{holler18coupled_mh}, which is reasonable given the fact that the parameters might live on different scales, and given the fact that the noise level of different measurements over time might be different.
\end{remark}

\begin{remark}[Model Variations]
	Currently, in the setting of \eqref{eq:tikhonov_main_min_prob}, the parameters $(K_1^i,k_2^i,$ $k_3^i)_{i=1}^n$ are bounded away from zero by $\epsilon>0$. For the $(\mu_j)_{j=1}^p$, we currently do not pose any constraints even though, as mentioned in Remark \ref{rem:sign_of_exponents}, only the choice $\mu_j < 0$ is reasonable from a physiological perspective. Likewise, $C_\art$ as parametrized in \eqref{eq:CA_parametriziation_tikhonov}, does not necessarily satisfy $C_\art(0) = 0$. These two conditions, however, can be easily incorporated in the model via the additional constraint $\mu_j \leq -\tilde{\epsilon}$ for some $\tilde{\epsilon} \geq 0$ and via setting $\lambda_p = -\sum_{j=1}^{p-1} \lambda _j$, respectively.
\end{remark}

\section{Numerical solution algorithm} \label{sec:algorithm}

In this section, we provide proof-of-concept numerical experiments that
illustrate the analytic unique identifiability results of Sections
\ref{sec:unique_identifiability} and \ref{sec:tikhonov_parameter_id} also
numerically.

\subsection{Experimental setup}
We consider the following experimental setup: As ground truth data, we consider
$n=3$ different anatomical regions, where, based on the realistic values
provided in \cite[Table 1]{Dim91}, the parameters are chosen as
\begin{equation}
	\label{metabolicp}
	\begin{pmatrix}K_1^1\\ k_2^1\\ k_3^1\end{pmatrix} = \begin{pmatrix}0.157\\ 0.174\\ 0.118\end{pmatrix}, ~ \begin{pmatrix}K_1^2\\ k_2^2\\ k_3^2\end{pmatrix} = \begin{pmatrix}0.161\\ 0.179\\ 0.096\end{pmatrix} ~ ~ \text{ and } ~ ~ \begin{pmatrix}K_1^3\\ k_2^3\\ k_3^3\end{pmatrix} = \begin{pmatrix}0.177\\ 0.159\\ 0.088\end{pmatrix}.
\end{equation}
Here, region 1 corresponds to the frontal cortex, region 2 to the temporal cortex and region 3 to the occipital cortex in the human brain. We model the true arterial concentration by the triexponential function given by
\[
	C_\art\left(t\right) = -5 e^{-0.5 t}+4 e^{-0.2 t}+e^{-0.1t}
\]
and the parent plasma fraction for the biexponential model by
\[
	f\left(t\right) = 0.1 e^{-0.005 t}+0.9e^{-0.1 t}
\]
for \(0\leq t\leq T_{\text{max}}\) where \(T_{\text{max}}=3750 ~\text{sec} ~ ~
\widehat{=} ~ ~ 62.5 ~ \text{min}\). See Figure \ref{fig:evolution} for the
visualisation of the parent plasma fraction, the arterial concentration and the
measured total and additive concentrations.
\begin{figure}[t]
	\begin{center}
		\includegraphics[scale=0.33]{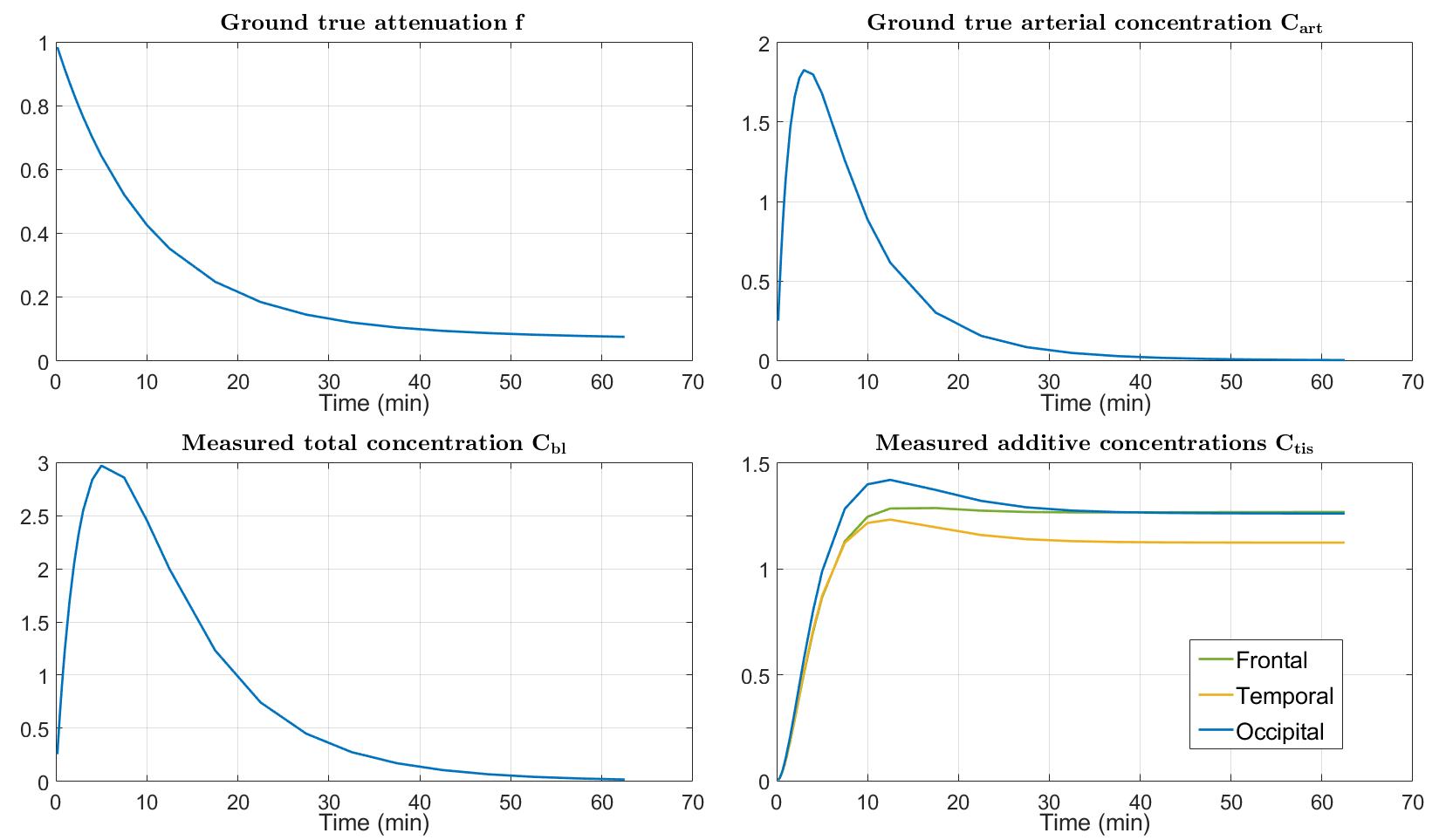}
		\caption[Ground true evolutions and measurements]{The evolutions in the first two plots display the ground true
			parent plasma fraction and arterial concentration. The third and fourth plot correspond to the resulting simulated measurements of the total and additive concentrations depending on the
			parent plasma fraction and the arterial concentration.}\label{fig:evolution}
	\end{center}
\end{figure}
We assume that we conduct a PET examination for \(T=25\) time frames where, equidistantly distributed, six take place in the first minutes, four in the following two minutes, two in the next two minutes, three in the next \(7.5\) minutes and finally ten in the remaining \(50\) minutes. Regarding $C_\tot$, we assume that measurements $C_\tot(s_1),\ldots,C_\tot(s_q)$ are available at the same timepoints $0=t_1,\ldots,t_T$ at which measurements of $C_\tis$ are available, i.e., $q=T$. This reflects the situation that an estimate of the total arterial tracer concentration is obtained from the PET images (rather than blood sampling), a technique that for which many recent works exist \cite{Zanotti-Fregonara2011}.
In view of Propositions \ref{prop:main_uniqueness_result} and \ref{prop:uniqueness_attenuation_maps}, the above experimental setting satisfies the assumptions such that unique identifiability from noiseless data can be guaranteed.

We summarize the unknown parameters of this setting by
\[
	x^\dagger = \left(\lambda_1, \lambda_2, \lambda_3, \mu_1, \mu_2, \mu_3, m_1, m_2, m_3, K_1^1, k_2^1, k_3^1, K_1^2, k_2^2, k_3^2, K_1^3, k_2^3, k_3^3\right)^T \in \R^{18},
\]
where $x^\dagger$ denotes the ground-truth parameters as specified above. For a
given number of measurements $T \in \N$, the data is summarized in vectorized
form via
\[ y^\dagger = (y_1,0) = F(x^\dagger) \in \R^{3T+T},
\]
where, abusing notation, $F:\R^{18} \to \R^{3T+T}$ is a vectorized version
of the forward model of \eqref{eq:forward_model_full}.

For the subsequent numerical experiments, we will also consider noisy versions
of the data, denoted by $y^{\delta_y}$ with $\delta_y$ being the noise level.
Those are defined by adding Gaussian noise with zero mean and variance
\(\frac{\delta_y^2}{3T}\) to each of the first $3T$ entries of $y^\dagger$
(note that no noise is added to the zero-entries), i.e., $(y^\delta)_l =
	y^\dagger _l + \eta _l$ with $\eta_l \sim
	\mathcal{N}\left(0,\frac{\delta_y^2}{3T}\right) $ for $l=1,\ldots,3T$, such
that
\[
	\mathbb{E}\left(\Vert y^\delta-y^\dagger\Vert_Y^2\right) = \sum_{l=1}^{3T}\mathbb{E}\left(\eta_{l}^2\right) = \sum_{l=1}^{3T}\frac{\delta_y^2}{3T} = \delta_y^2.
\]
where $\mathbb{E}(\cdot) $ denotes the expectation.

As we are dealing with locally convergent methods, it will also be important to
choose a reasonable initial guess $x_0$ for the algorithm. In order to test the
performance of the algorithm in dependence on how close the initial guess is to
the true solution, we employ the following steps to obtain perturbed initial
guesses. Given a level of perturbation $\delta_x$, we define
\begin{equation} \label{eq:x0_perturbations}
	x_0 = x^\dagger (1+\sigma \gamma)
\end{equation}
where \(\sigma \sim \text{Unif}\left(\left\{-1,1\right\}\right)\), i.e., is uniformly distributed on $\{-1,1\}$ and \(\gamma \sim \mathcal{N}\left(\delta_x,\frac{1}{4}\delta_x\right)\), i.e., is Gaussian distributed with mean $\delta_x$ and variance $\delta_x/4$. This results in a expected squared deviation of $x_0$ from $x^\dagger $ as by
\begin{equation}
	\label{Eestimation}
	\mathbb{E}\left(\frac{\Vert x_0-x^\dagger\Vert_X^2}{\Vert x^\dagger\Vert_X^2}\right) =\frac{1}{\Vert x^\dagger\Vert_X^2}\sum_{i=1}^{18}\left(x^\dagger_i\right)^2\mathbb{E}\left(\sigma_i^2 \gamma_i^2 \right) = \mathbb{E}\left(\gamma_1^2\right) = \frac{1}{4}\delta_x+\delta_x^2,
\end{equation}
where $\sigma_i\sim \text{Unif}\left(\left\{-1,1\right\}\right) $ and  $\gamma_i \sim \mathcal{N}\left(\delta_x,\frac{1}{4}\delta_x\right) $ for $i=1,\ldots,18$ are independent random variables.

\subsection{Algorithmic implementation}
In order to numerically solve the non-linear parameter identification, we
employ the \emph{iteratively regularized Gauss-Newton method} of \cite{Bak92},
see also \cite[Section 11.2]{Engl2000}. This is a standard method for solving
non-linear inverse problems. It is related to the Tikhonov approach discussed
in Section \ref{sec:tikhonov_parameter_id} in the sense that similar results on
stability and convergence/consistency (under appropriate source conditions) can
be obtained, see for instance \cite{Bla97,Hoh97}, but different to the Tikhonov
approach, regularization is achieved by early stopping of the algorithm rather
than adding an additional penalty term to the data-fidelity term. Early
stopping has the advantage that, using an estimate of the noise level of the
data, the discrepancy principle \cite[Section 4.3]{Engl2000} can be used to
determine the appropriate amount of regularization, without requiring multiple
solutions of a minimization problem as would be the case with the Tikhonov
approach.

Given an initial guess $x_0 \in \Dc(F)$ and a sequence of regularization
parameters $(\alpha_k)_k$ such that
\begin{equation}
	\label{eq:alphacond}
	\alpha_k>0, ~ ~ 1\leq \frac{\alpha_k}{\alpha_{k+1}}\leq c_\alpha, ~ ~ \lim_{k\to\infty}\alpha_k=0,
\end{equation}
where \(c_\alpha>1\) is some constant, the iteration steps of the iteratively regularized Gauss-Newton method for $k=0,1,2,\ldots$ are given as
\begin{equation}
	\label{eq:irgnm_iteration}
	x_{k+1}^\delta=x_k^\delta+\left(F'\left[x_k^\delta\right]^TF'\left[x_k^\delta\right]+\alpha_kI\right)^{-1}\left(F'\left[x_k^\delta\right]^T\left(y^\delta-F\left(x_k^\delta\right)\right)+\alpha_k\left(x_0-x_k^\delta\right)\right),
\end{equation}
where $F'[x_k^\delta] \in \R^{(nT+q)\times (2p+3+2n)}$ denotes the Jacobian Matrix of $F$ at $x_k^\delta$ and $F'\left[x_k^\delta\right]^T$ its transpose.

Those iteration steps are repeated until the \emph{discrepancy principle} is
satisfied, that is, until $\Vert F\left(x_k^\delta\right)-y^\delta\Vert_Y\leq
	\tau\delta$ holds for the first time, where $\delta $ is an estimate of
$\|y^\delta - y^\dagger \|_Y$ and $\tau >1$ with $\tau \approx 1$ is a
parameter. The iterate $x_k$ is then returned as the approximate solution of
$F(x) \approx y^\delta$.

\begin{remark}[Guaranteed convergence] Since the parameter identification problem addressed here is highly non-linear, global convergence guarantees for any numerical solution algorithm are out of reach. For the iteratively regularized Gauss-Newton method together with the discrepancy principle, as considered here, at least local convergence guarantees can be obtained as long as a particular source condition, i.e., a regularity condition on the ground truth solution holds, see \cite{Hoh97} for details.
\end{remark}

In a practical application, the iteration \eqref{eq:irgnm_iteration} is
combined with a projection on $\Dc(F)$, which is a closed, convex set for which
the projection is explicit (we denote the projection map by
$\mathcal{P}_{\Dc(F)}$), see \cite[Theorem 4]{Ka06} for corresponding results
on convergence of such a projected method. Together with this, we arrive at the
algorithm for solving $F(x) \approx y^\delta$ as provided in Algorithm
\ref{alg:irgnm}, where we set $\epsilon = 10^{-3}$ for defining $\Dc(F) = \R^3
	\times \R^3 \times [0,\infty) \times (-\infty,0]^2 \times
	[\epsilon,\infty)^{3T}$.

\begin{algorithm}
	\caption{Parameter Identification by IRGNM}\label{alg:irgnm}
	\begin{algorithmic}
		\STATE{\textbf{Input: }$\delta_x, \delta_y>0, ~~ \tau>1, ~~(\alpha_i)_i,$}
		\STATE{\hspace{1.1cm} $x_0\in\mathcal{D}\left(F\right),~~ y^\delta \in Y, (C_\tot(t_j))_{j=1}^T$}

		\STATE{\textbf{Initialise:} $r_0 \gets y^\delta- F\left(x_0\right)$}
		\STATE{\hspace{2cm}$ i \gets 0$}
		\WHILE{$\Vert r_i\Vert_Y>\tau\delta_y$}
		\STATE{$\mathcal{A} \gets F'\left[x_i\right]^*$}
		\STATE{$\mathcal{B} \gets \mathcal{A}\mathcal{A}^*$}
		\STATE{\textit{Solve   } $~~\mathcal{B}\left(x-x_i\right) = \mathcal{A}r_i+\alpha_i\left(x_0-x_i\right)~~$ \textit{for} $~~x\in X$}
		\STATE{$x_{i+1} \gets \mathcal{P}_{\mathcal{D}\left(F\right)}\left(x\right)$}%
		\STATE{$r_{i+1} \gets y^\delta-F\left(x_{i+1}\right)$}
		\STATE{$i\gets i+1$}
		\ENDWHILE
		\RETURN $x_k$
	\end{algorithmic}
\end{algorithm}
For the regularization parameters \(\left(\alpha_i\right)_i\) we choose the ansatz
\begin{equation}
	\label{alpharegi}
	\alpha_i = a e^{-bi}
\end{equation}
for \(i \in \mathbb{N}\) where $a=800 $ and $b = 1/5$ are fixed parameters. Besides fulfilling the decay conditions of \eqref{eq:alphacond}, this choice is motivated by the goal of penalizing deviations from the initial guess rather strongly at early iterations ($a$ large), and avoiding an exploding condition number of the matrix $\left(F'\left[x_k^\delta\right]^TF'\left[x_k^\delta\right]+\alpha_kI\right)$, that needs to be inverted at each iteration, during later iterations ($b$ rather small).

For the realization of the forward operator \(F\) and the adjoint of its
Fréchet-Differential the main idea is to vectorise the computations and omit
expensive for-loops. For that, one may exploit that the entries of
\(F\left(x\right)\) and \(F'\left[x\right]^*\), which mostly consist of
integral type entities, may be computed analytically. The elementary components
are of the form \(\int e^{\mu s}\dx s$, $ \int
e^{\left(k_2+k_3\right)\left(s-t\right)}e^{\mu s}\dx s\)$, $\(\int
se^{\left(k_2+k_3\right)\left(s-t\right)}e^{\mu s}\dx s\) and \(\int se^{\mu
		s}\dx s\). The latter may be computed by hand applying integration by parts.
The corresponding terms, which depend on a combination of time evaluations,
region and polyexponential parameters of \(C_\art\), are saved in
three-dimensional tensors which are overloaded throughout a respective
iteration to finally build up the adjoint operator of the Fréchet-Differential.
For the implementation of the IRGNM we will use the computational software
\textsc{Matlab} (see \cite{MATLAB:2020}).

\section{Experimental results}
\label{sec:experiments}
In order to evaluate the identifiability of the parameters $x^\dagger$ from noisy data $y^\delta \approx F(x^\dagger)$, we consider both the situation of noiseless data ($\delta_y = 0)$ and different noise levels $\delta_y \in  \{10^{-4},10^{-3},10^{-2}\} $. For the latter, it is important to note that, relative to the magnitude of the data, the noise level $\delta_y = 10^{-2}$ is already rather high. Indeed, as can be observed in Figure \ref{fig:magnit}, which depicts the average magnitudes of different regions in the data, about $25\%$ of the measurements have a magnitude below $10^{-1}$. For those values, adding Gaussian noise with standard deviation $\delta_y=10^{-2}$, for instance, results in noisy data whose standard deviation is already between $10\%$ and $100\%$ of the magnitude of the data itself.
\begin{figure}[t]
	\begin{center}
		\includegraphics[scale=0.25]{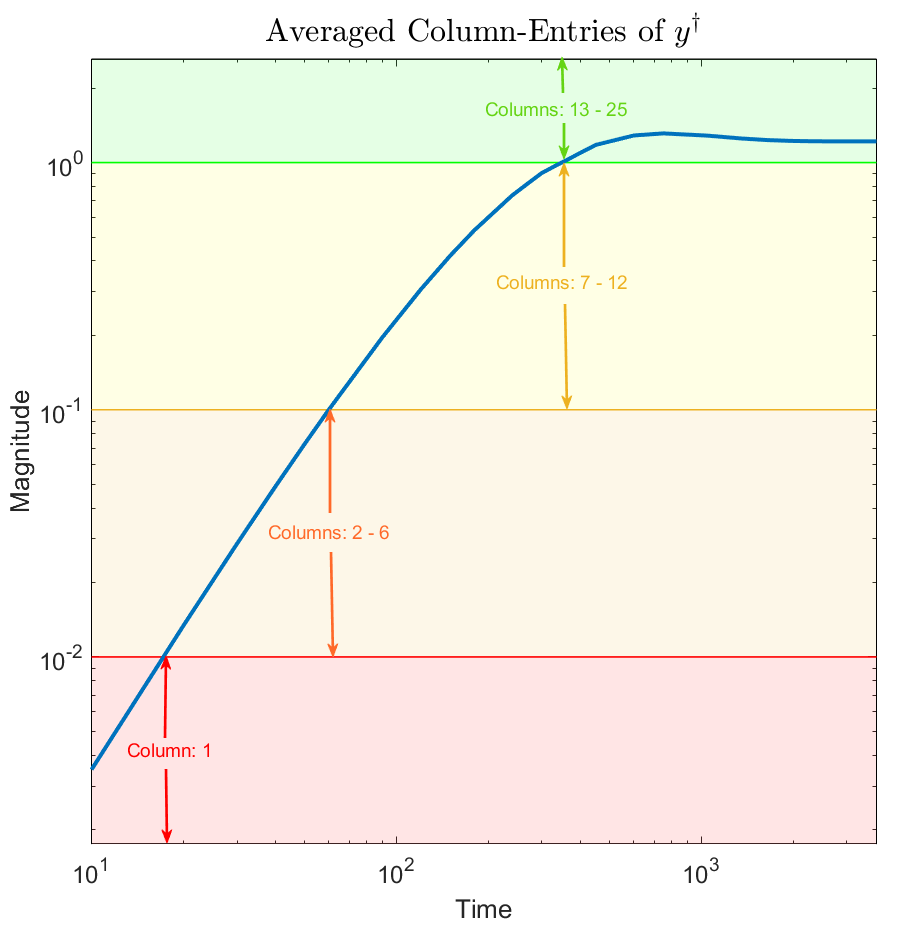}
		\caption{\label{fig:magnit} Visualization of magnitudes occurring in different time-intervals (i.e., columns) of the data $y^\dagger$}
	\end{center}
\end{figure}

For defining the initialization of the algorithm, we test with perturbations of
the ground truth as defined in \eqref{eq:x0_perturbations} for $\delta_x \in
	\{0.01,0.05,0.1,0.15\}$. Recall that those are relative perturbations such that
the square root of the expected squared deviation from the ground truth
$x^\dagger$ is between $\approx 5\%$ for $\delta_x = 0.01$ and $\approx 25\%$
for $\delta_x = 0.15$.

To quantify the improvement compared to the initialization that is obtained by
the algorithm, in addition to plotting the residual value $\|F(x_k) -
	y^\delta\|_Y$ and the relative error $\frac{\|x_k - x^\dagger \|_X}{\|x^\dagger
		\|_X}$ over iterations, we provide the following two values:
\[\rho_{opt} = 100 \left( 1 - \Vert x_0^\delta -x^\dagger\Vert_X^{-1} \min_{1\leq k\leq iter_{\text{max}}} \Vert x_k^\delta -x^\dagger\Vert_X\right)
\]
provides the best possible improvement (in percent, relative to the
initialization) that was obtained during all iterations, and
\[
	\rho_d = 100 \left( 1 - \Vert x_0^\delta -x^\dagger\Vert_X^{-1} \Vert x_N^\delta -x^\dagger\Vert_X\right) \%
\]
provides the improvement that was obtained at iteration $N$ where the algorithm
was stopped by the discrepancy principle, i.e., the first iteration $N$ where $
	\Vert F\left(x_N^\delta\right)-y^\delta\Vert_Y\leq \tau \delta$ was fulfilled.

Experiments where carried out for each combination of $\delta_y$ and $\delta_x$
as above. In order to obtain representative results, each experiment was
carried out 100 times. Those experiments where the algorithm diverged (i.e., no
improvement compared to the initialization was achieved) were dropped (see
Table \ref{tab:irgnm} for the number of dropped experiments for each parameter
combination) and, among the remaining ones, the one whose performance was
closest do the median performance of all repetitions was selected for the
figures below.

\begin{table}[t]
	\caption{Number (full setting/setting with known $C_\art$) of experiments (out of ten) not included in the final evaluation due to divergence. \label{tab:irgnm}}
	\begin{center}
		\begin{tabular}{ccccc}
			                                & $\boldsymbol{\delta_x=0.15}$ & $\boldsymbol{\delta_x=0.1}$ & $\boldsymbol{\delta_x=0.05}$ & $\boldsymbol{\delta_x=0.01}$ \\ \toprule
			$\boldsymbol{\delta_y=0}$       & 0/0                          & 0/0                         & 0/0                          & 0/0                          \\
			$\boldsymbol{\delta_y=10^{-4}}$ & 77/36                        & 65/17                       & 30/2                         & 4/0                          \\
			$\boldsymbol{\delta_y=10^{-3}}$ & 58/21                        & 40/8                        & 18/1                         & 2/0                          \\
			$\boldsymbol{\delta_y=10^{-2}}$ & 32/14                        & 29/4                        & 13/1                         & 2/0                          \\  \bottomrule
		\end{tabular}
	\end{center}

\end{table}

Figures \ref{fig:irgnm_del_0} to \ref{fig:irgnm_del_c2} show the result
obtained for different choices of $\delta_y = 0,10^{-4},10^{-3},10^{-2}$. In
those figures, the top and bottom rows depict the evaluation of the residual
value $\|F(x_k) - y^\delta\|_Y$ and the relative error $\frac{\|x_k - x^\dagger
		\|_X}{\|x^\dagger \|_X}$ over iterations, respectively, both with a logarithmic
scale for the vertical axis. The different columns show results for the
different choices of $\delta_x = 0.15,0.1,0.05,0.01$. The values of
$\rho_{opt}$ and $\rho_d$ (the latter only for $\delta_y>0$), together with the
respective iterations, are provided at the top of plots of the second row. Note
that, while the algorithm was always run until a fixed, maximal number of
iterations (300 for $\delta_y =0$ and $200$ for $\delta_y=0$) for obtaining the
figures, in practice the iterations would be stopped by the discrepancy
principle for $\delta_y>0$. For $\delta_y>0$, the red lines always indicate the
levels of the residual value (top row) and iteration number (bottom row) where
the algorithm would have stopped according to the discrepancy principle.

In Figure \ref{fig:irgnm_del_0}, which considers the noiseless case, one can
observe that the ground truth parameters $x^\dagger$ are approximated very well
for all levels of $\delta_x$. This confirms our analytic unique identifiability
result also in practice.

Considering Figures \ref{fig:irgnm_del_c4} to \ref{fig:irgnm_del_c2}, one can
observe that, in the low noise regime, a good approximation of the ground truth
$x^\dagger$ is still possible across different values of $\delta_x$. For higher
noise, a good initialization (i.e., a small value of $\delta_x$) is of
increasing importance and, in some cases, in particular for $\delta_y =
	10^{-2}$ and the parameters of the parent plasma fraction $f$, the ground truth
can not be recovered reasonably well.

\begin{figure}
	\begin{center}
		\includegraphics[scale=0.25, trim=2cm 0 4cm 0, clip]{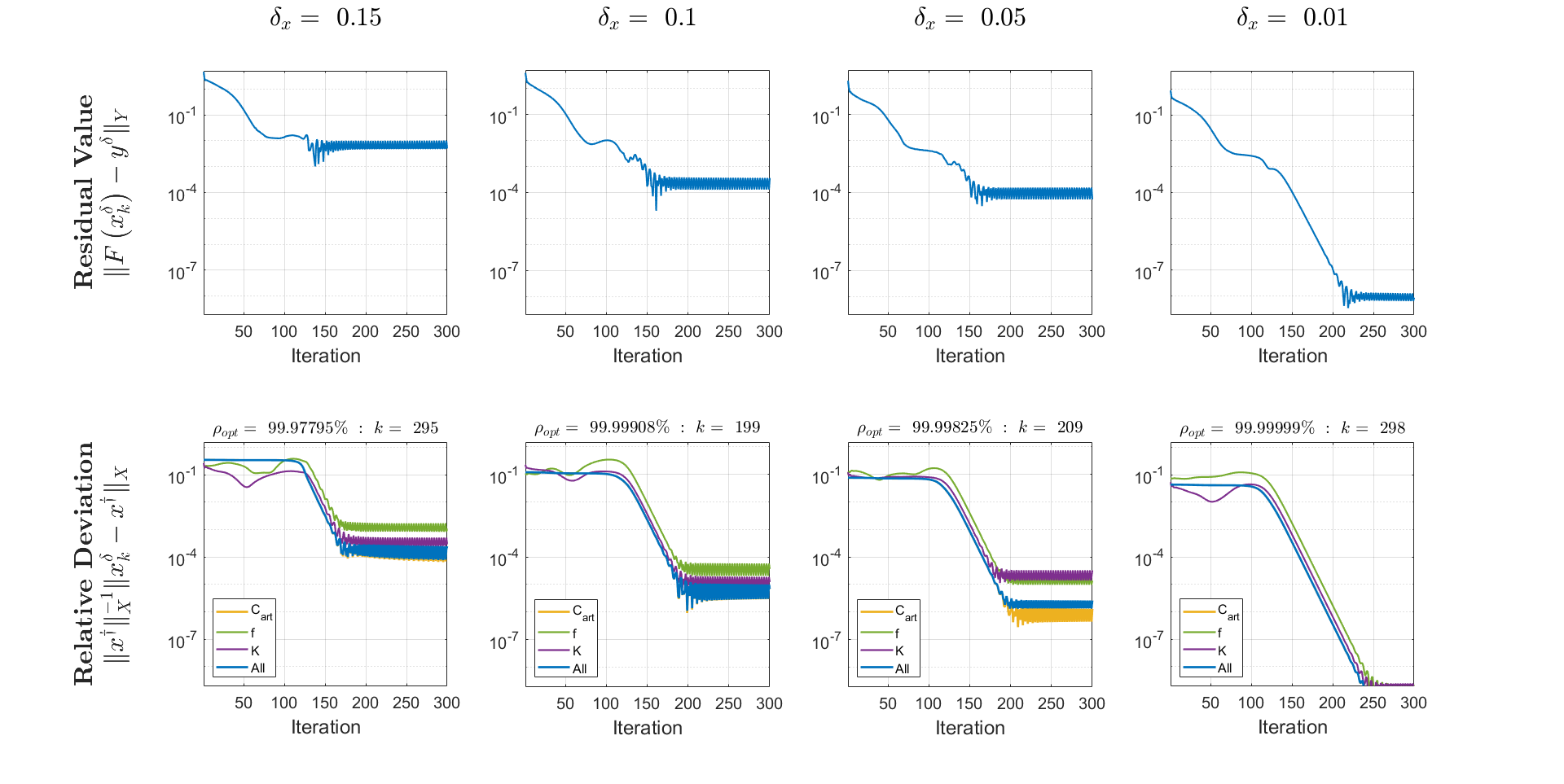}
		\caption[Performance of IRGNM for \(\delta_y =0\) for different initial guesses]{Performance of IRGNM for \(\delta_y =0\) and different initial guesses}\label{fig:irgnm_del_0}
	\end{center}
\end{figure}

\begin{figure}
	\begin{center}
		\includegraphics[scale=0.25, trim=2cm 0 4cm 0, clip]{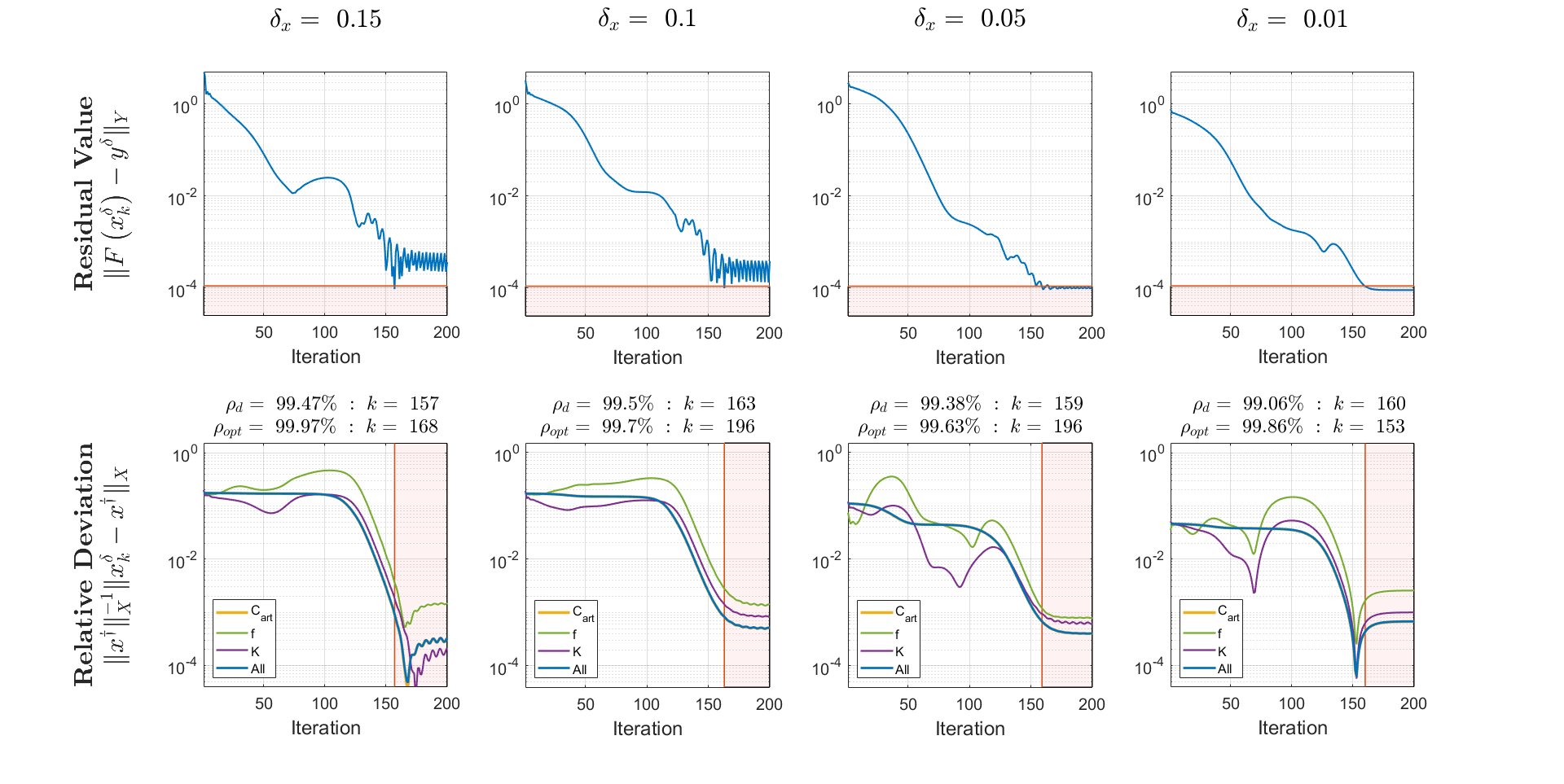}
		\caption[Performance of IRGNM for \(\delta_y =10^{-4}\) for different initial guesses]{Performance of IRGNM for \(\delta_y =10^{-4}\) and different initial guesses}\label{fig:irgnm_del_c4}
	\end{center}
\end{figure}
\begin{figure}
	\begin{center}
		\includegraphics[scale=0.25, trim=2cm 0 4cm 0, clip]{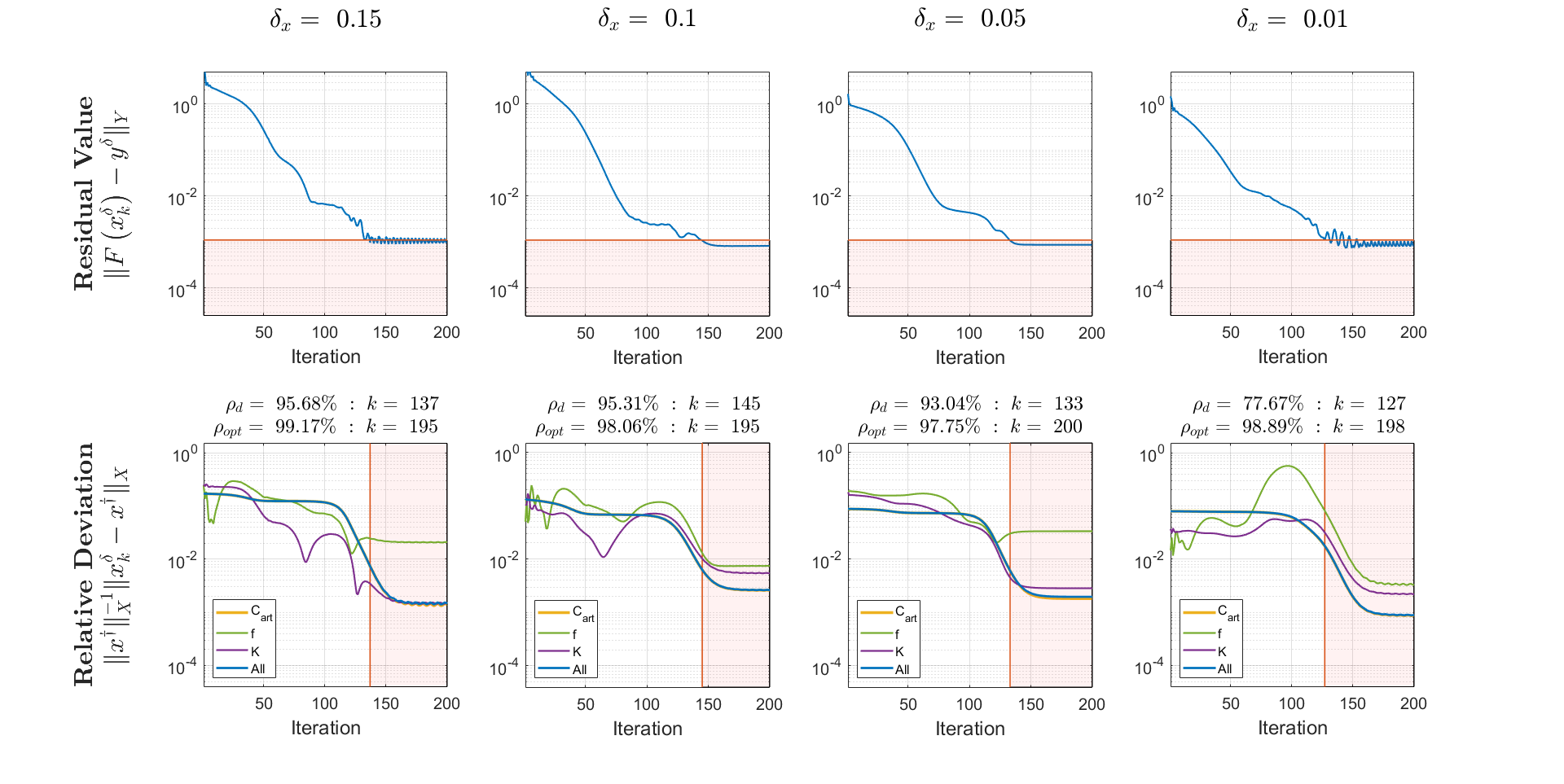}
		\caption[Performance of IRGNM for \(\delta_y =10^{-3}\) for different initial guesses]{Performance of IRGNM for \(\delta_y =10^{-3}\) and different initial guesses}\label{fig:irgnm_del_c3}
	\end{center}
\end{figure}
\begin{figure}
	\begin{center}
		\includegraphics[scale=0.25, trim=2cm 0 4cm 0, clip]{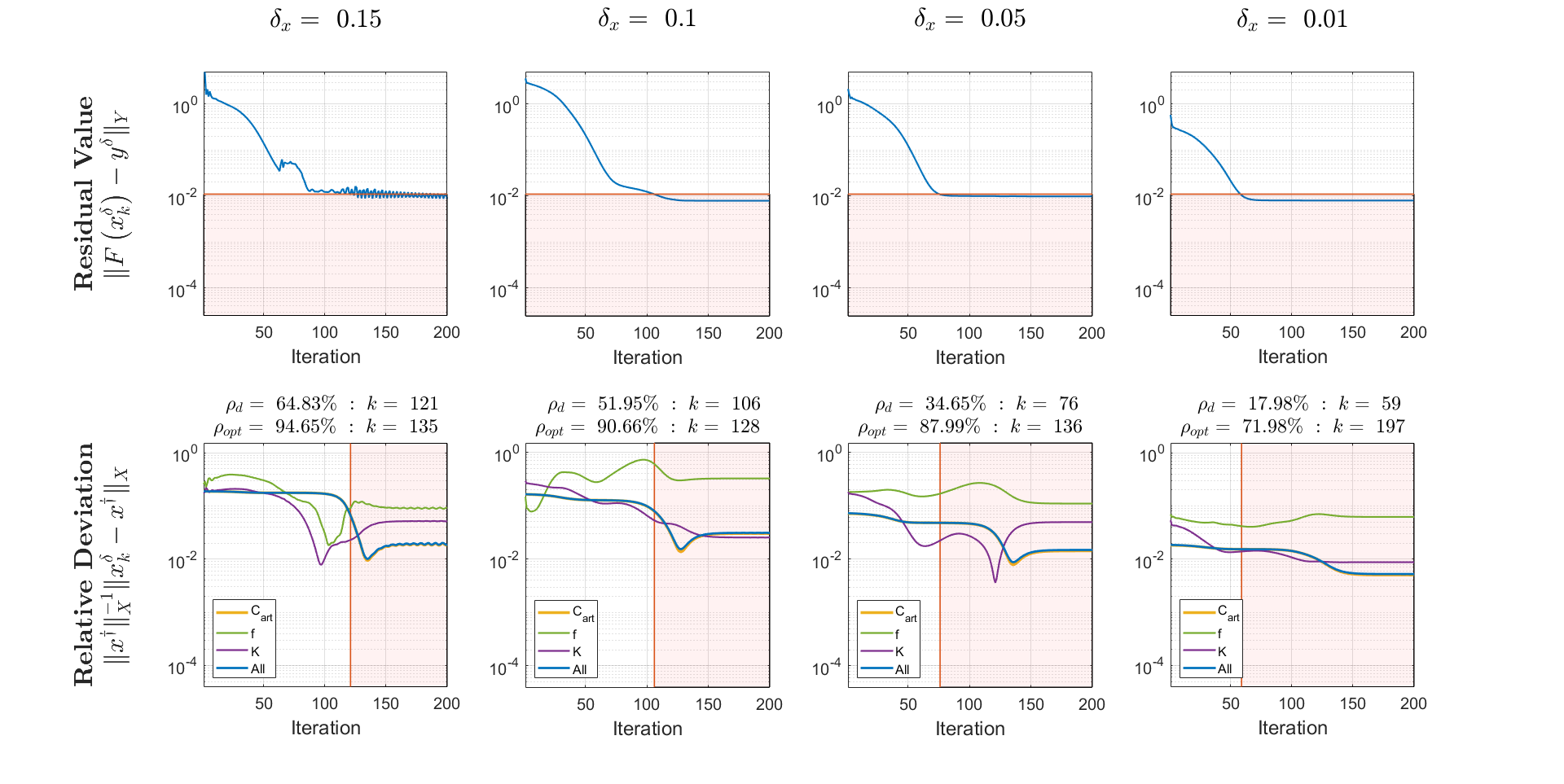}
		\caption[Performance of IRGNM for \(\delta_y =10^{-2}\) for different initial guesses]{Performance of IRGNM for \(\delta_y =10^{-2}\) and different initial guesses}\label{fig:irgnm_del_c2}
	\end{center}
\end{figure}

In a second set of experiments, we consider the situation that not only
$C_\tot$, but also measurements of the values of $C_\art$ are available at the
time-points $t_1,\ldots,t_T$. While it is possible in practice to obtain those
values via blood sample analysis, this procedure is time consuming and
expensive, such that it is a relevant question to what extent such samples
improve the identifiability of the tissue parameters.

Again, for each combination of $\delta_y$ and $\delta_x$, each experiment was
carried out 100 times, the number of divergent experiments is shown in Table
\ref{tab:irgnm} and the Figures show the experiment whose performance was
closest to the median performance of the non-divergent experiments.

Results are shown in Figures \ref{fig:noiseless} to \ref{fig:noise_dy0c2},
where the quantities shown in and above the plots are the same as in Figures
\ref{fig:irgnm_del_0} to \ref{fig:irgnm_del_c2}. It can be observed that the
performance with known $C_\art$ is improved compared to the situation where
only $C_\tot$ is known, across all choices of $\delta_y$ and $\delta_x$. While
for $\delta_y \in \{0,10^{-4}\}$, both versions yield acceptable results, for
$\delta_y \in \{10^{-3},10^{-2}\}$ knowledge of $C_\art$ enables a good
approximation of the ground truth parameters in situations where this was not
possible with knowing only $C_\tot$. This indicates that, as one would expect,
the problem of also identifying $f$ from measurements is significantly more
difficult and there is a benefit (at least for this solution methods) in
measuring $C_\art$ via blood samples.

\begin{figure}[h!]
	\begin{center}
		\includegraphics[scale=0.25, trim=2cm 0 4cm 0, clip]{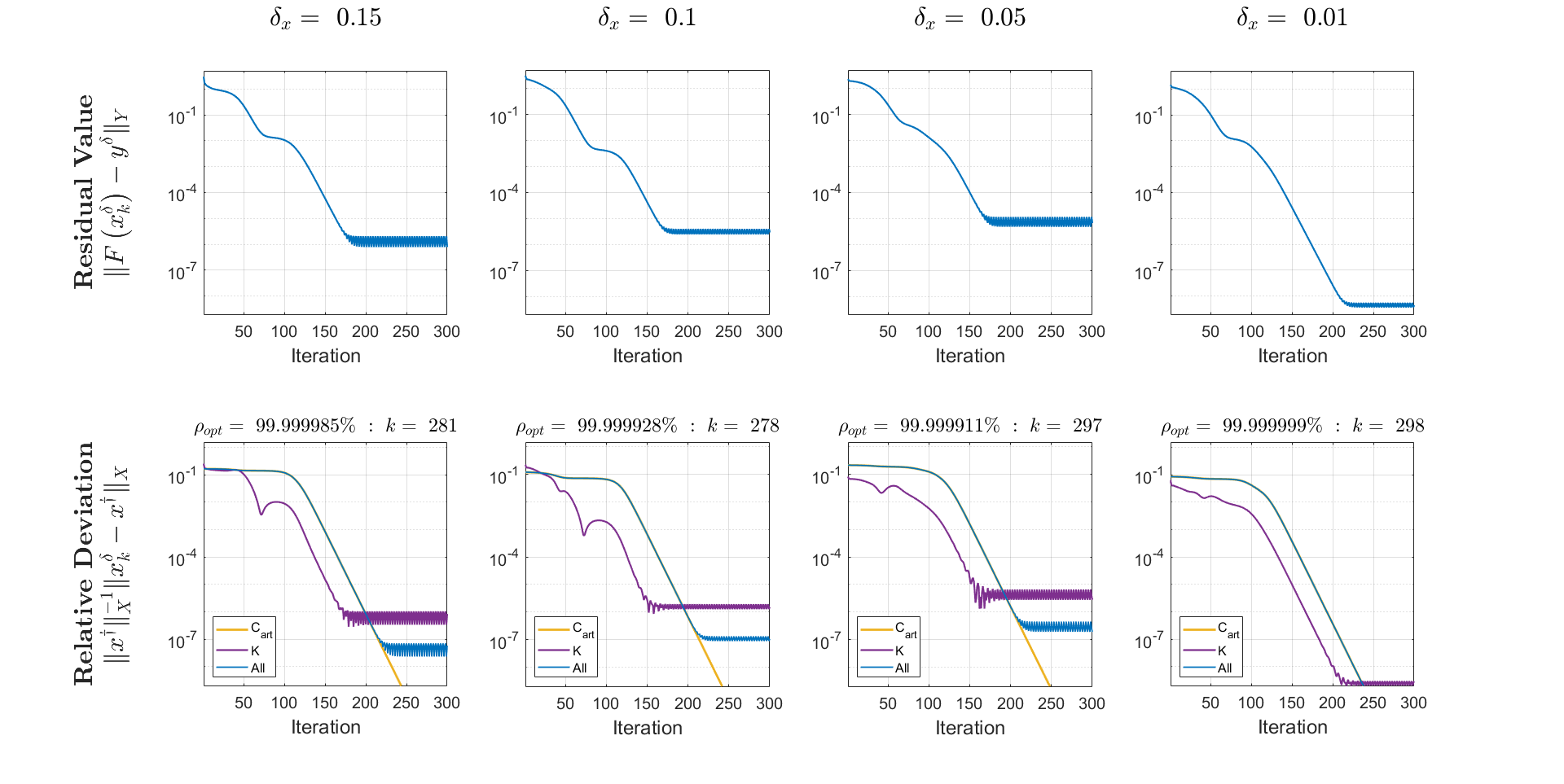}
		\caption[Performance of IRGNM for \(\delta_y =0\) for different initial guesses and known parent plasma fraction $f$]{Performance of IRGNM for \(\delta_y =0\) and different initial guesses and known parent plasma fraction $f$}\label{fig:noiseless}
	\end{center}
\end{figure}

\begin{figure}[h!]
	\begin{center}
		\includegraphics[scale=0.25, trim=2cm 0 4cm 0, clip]{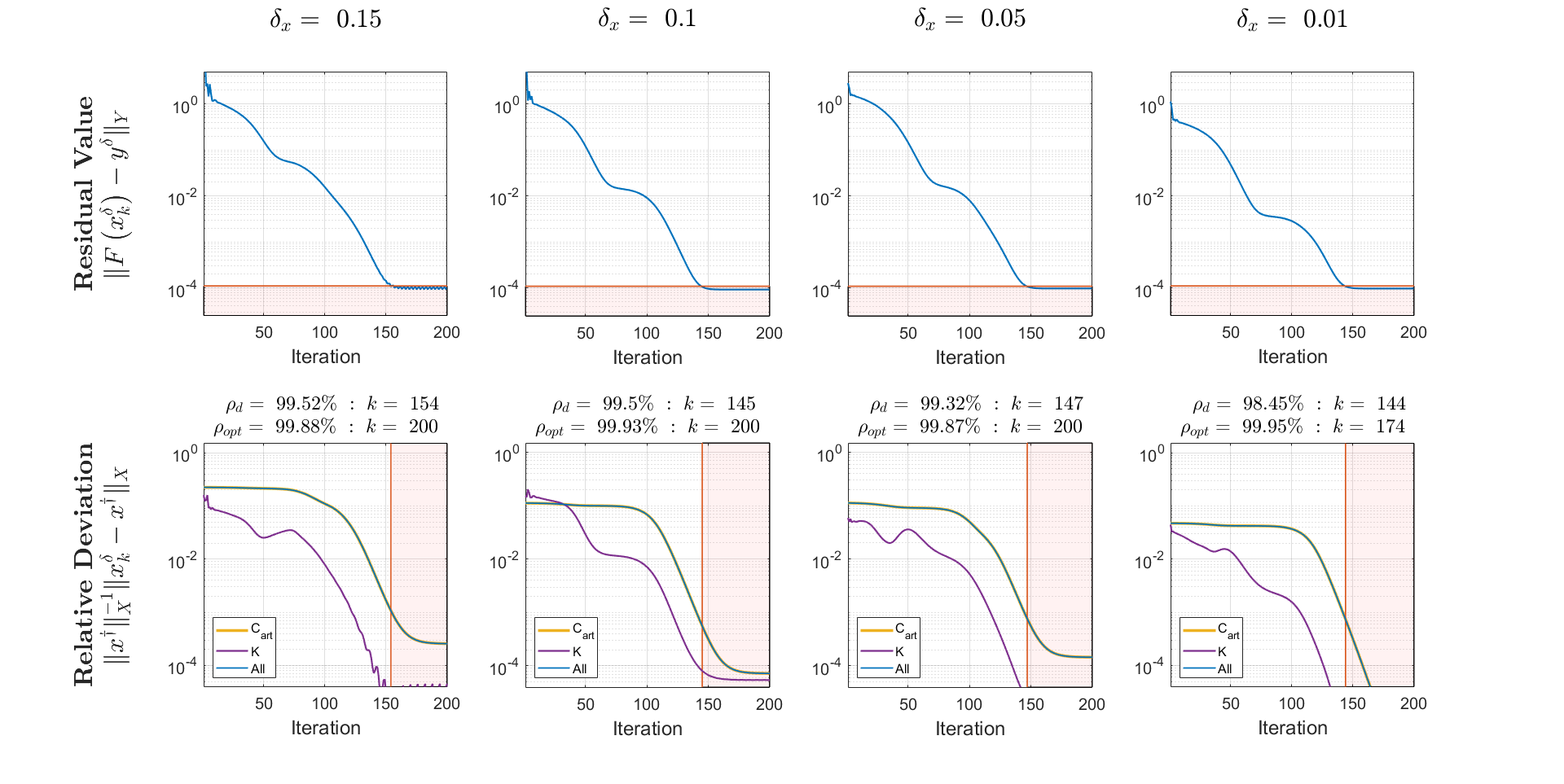}
		\caption[Performance of IRGNM for \(\delta_y =10^{-4}\) for different initial guesses and reduced operator]{Performance of IRGNM for \(\delta_y =10^{-4}\) and different initial guesses and reduced operator}\label{fig:noise_dy0c4}
	\end{center}
\end{figure}
\begin{figure}[h!]
	\begin{center}
		\includegraphics[scale=0.25, trim=2cm 0 4cm 0, clip]{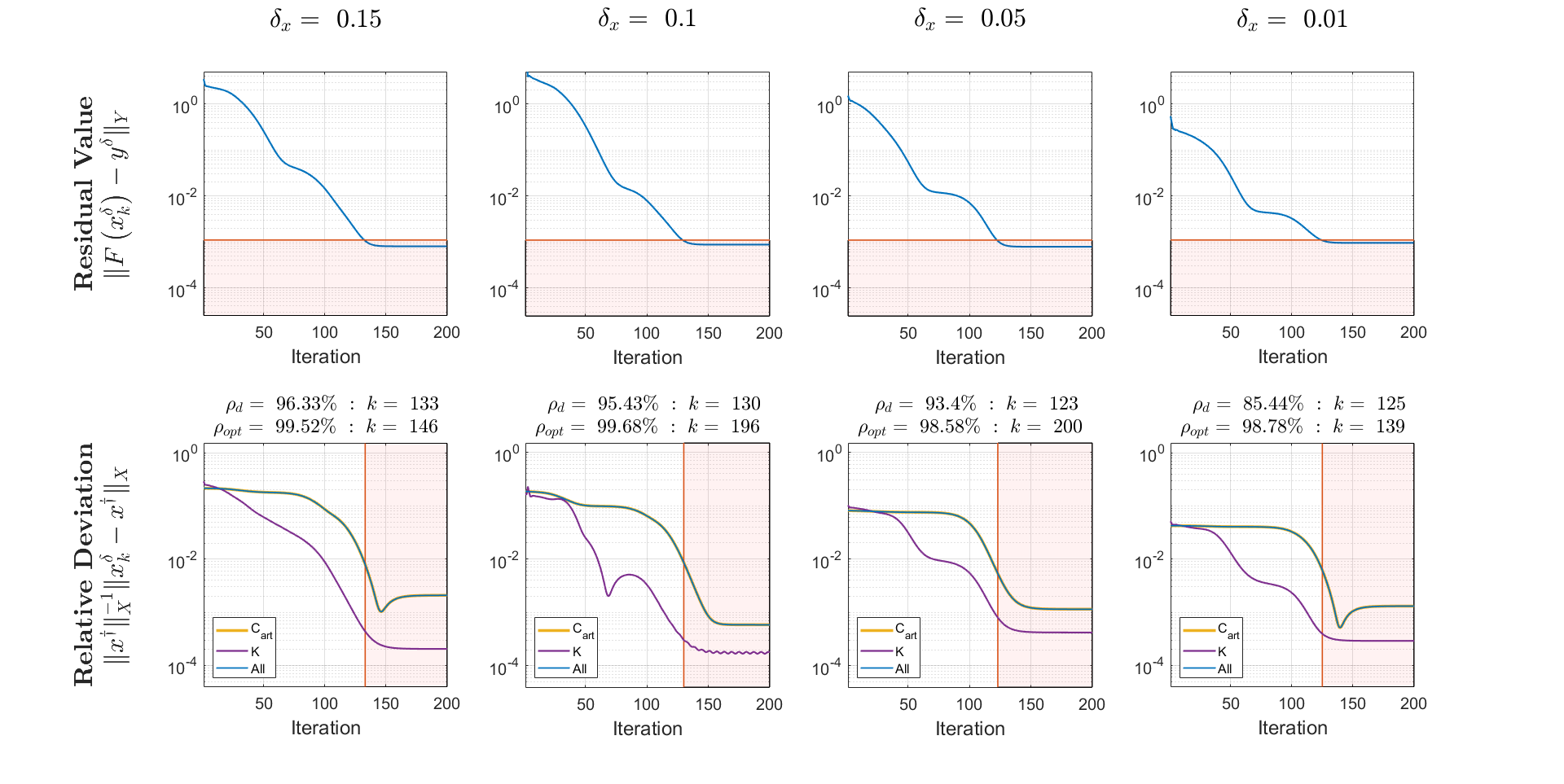}
		\caption[Performance of IRGNM for \(\delta_y =10^{-3}\) for different initial guesses and reduced operator]{Performance of IRGNM for \(\delta_y =10^{-3}\) and different initial guesses and reduced operator}\label{fig:noise_dy0c3}
	\end{center}
\end{figure}

\begin{figure}[h!]
	\begin{center}
		\includegraphics[scale=0.25, trim=2cm 0 4cm 0, clip]{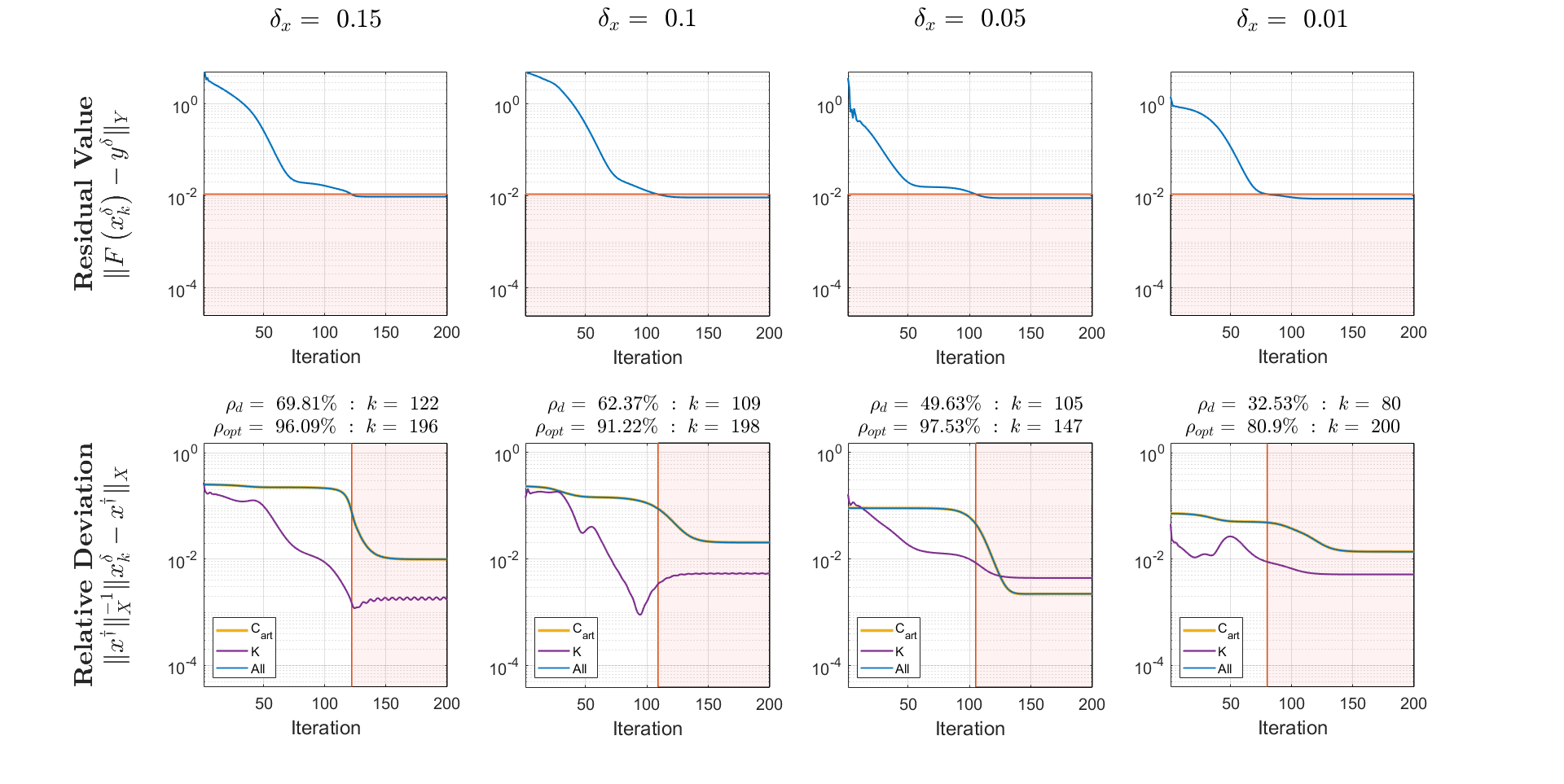}
		\caption[Performance of IRGNM for \(\delta_y =10^{-2}\) for different initial guesses and reduced operator]{Performance of IRGNM for \(\delta_y =10^{-2}\) and different initial guesses and reduced operator}\label{fig:noise_dy0c2}
	\end{center}
\end{figure}

\section{Conclusions}
\label{sec:conclusions}

In this work, we have shown that most tissue parameters of the irreversible
two-tissue compartment model in quantitative PET imaging can, in principle, be
recovered from standard PET measurements only. Furthermore, a full recovery of
all parameters is possible provided that sufficiently many measurements of the
total arterial concentration are available. This is important, since it shows
that parameter recovery is possible via using only quantities that are easily
obtainable in practice, either directly from the acquired PET images or with a
relatively simple analysis of blood samples. While these results consider the
idealized scenario of noiseless measurements, we have further shown that
standard Tikhonov regularization applied to this setting yields a stable
solution method that is capable of exact parameter identification in the
vanishing noise limit.

These findings open the door to a comprehensive numerical investigation of
parameter identification based on only PET measurements and estimates of the
total arterial tracer concentration, using real measurement data and advanced
numerical algorithms. While the numerical results in this paper provide a first
indication that it can be possible to transfer our analytical results to
concrete applications, we expect that a comprehensive effort is necessary to
obtain a practically usable numerical framework for parameter identification.
This will be the next step of our research in that direction.

\clearpage
\bibliographystyle{siamplain}
\bibliography{references}

\end{document}